\documentclass[10pt,reqno]{amsart}

\usepackage{amsthm, mathrsfs, amsmath, amstext, amsxtra, amsfonts, dsfont, amssymb}
\usepackage{lmodern}
\usepackage[dvipsnames]{xcolor}
\usepackage[colorlinks, linkcolor=red, citecolor=blue, urlcolor=OliveGreen, pagebackref, hypertexnames=false]{hyperref}

\usepackage[belowskip=2pt,aboveskip=0pt]{caption}
\usepackage{booktabs}
\setlength\intextsep{2pt}


\usepackage{geometry}
\geometry{
	a4paper,
	total={160mm,247mm},
	left=25mm, right=25mm,
	top=25mm, bottom=25mm,
}

\newcommand{\R}{\mathbb R} 
\newcommand{\eps}{\epsilon}

\newcommand{\Fcal}{\mathcal F}

\newcommand{\im}[1]{\mbox{Im} #1} 
\newcommand{\scal}[1]{\left\langle #1 \right\rangle} 

\newcommand{\defendproof}{\hfill $\Box$} 

\newtheorem{theorem}{Theorem}[section]
\newtheorem{lemma}[theorem]{Lemma} 
\newtheorem{proposition}[theorem]{Proposition}
 
\theoremstyle{definition}
\newtheorem{definition}[theorem]{Definition}
\newtheorem{remark}[theorem]{Remark}

\title[Instability standing waves FNLS]{On instability of standing waves for the mass-supercritical fractional nonlinear Schr\"odinger equation} 

\author[V. D. Dinh]{Van Duong Dinh}
\address[V. D. Dinh]{Institut de Math\'ematiques de Toulouse UMR5219, Universit\'e Toulouse CNRS, 31062 Toulouse Cedex 9, France and Department of Mathematics, HCMC University of Pedagogy, 280 An Duong Vuong, Ho Chi Minh, Vietnam}
\email{dinhvan.duong@math.univ-toulouse.fr}

\keywords{Fractional nonlinear Schr\"odinger equation; Standing wave; Instability; Localized virial estimate; Blowup}
\subjclass[2010]{35B35, 35Q55}

\begin{document}

\begin{abstract}
We consider the focusing $L^2$-supercritical fractional nonlinear Schr\"odinger equation
\[
i\partial_t u - (-\Delta)^s u = -|u|^\alpha u, \quad (t,x) \in \R^+ \times \R^d,
\]
where $d\geq 2, \frac{d}{2d-1} \leq s <1$ and $\frac{4s}{d}<\alpha<\frac{4s}{d-2s}$. By means of the localized virial estimate, we prove that the ground state standing wave is strongly unstable by blow-up. This result is a complement to a recent result of Peng-Shi [{\bf J. Math. Phys. 59 (2018), 011508}] where the stability and instability of standing waves were studied in the $L^2$-subcritical and $L^2$-critical cases.
\end{abstract}

\maketitle

\section{Introduction}
\setcounter{equation}{0}

In recent years, there has been a great deal of interest in studying the nonlinear fractional Schr\"odinger equation, namely
\begin{align}
i\partial_t u - (-\Delta)^s u = f(u), \label{general FNLS}
\end{align}
where $0<s<1$ and $f(u)$ is the nonlinearity. The fractional differential operator $(-\Delta)^s$ is defined by $(-\Delta)^s u = \Fcal^{-1} \left[|\xi|^{2s} \Fcal u\right]$, where $\Fcal$ and $\Fcal^{-1}$ are the Fourier transform and inverse Fourier transform, respectively. The fractional nonlinear Schr\"odinger equation was first discovered by N. Laskin \cite{Laskin1, Laskin2} owing to the extension of the Feynman path integral, from the Brownian-like to L\'evy-like quantum mechanical paths. The fractional nonlinear Schr\"odinger equation also appears in the continuum limit of discrete models with long-range interactions (see \cite{KirkLenzStaf}) and in the description of Boson stars as well as in water wave dynamics (see e.g. \cite{FrohJonsLenz} or \cite{IonePusa}). 

In this paper, we consider the Cauchy problem for the focusing fractional nonlinear Schr\"odinger equation
\begin{align}
\left\{
\begin{array}{rcl}
i\partial_t u - (-\Delta)^s u &=& - |u|^{\alpha} u, \quad (t,x) \in \R^+ \times \R^d, \\
u(0) &=& u_0, 
\end{array}
\right.
\label{FNLS}
\end{align}
where $u$ is a complex valued function defined on $\R^+ \times \R^d$, $d\geq 1, 0 \leq s <1$ and $0<\alpha<\alpha^\star$ with
\begin{align}
\alpha^\star:= \left\{
\begin{array}{cl}
\frac{4s}{d-2s} &\text{if } d>2s,\\
\infty &\text{if } d\leq 2s.
\end{array}
\right. \label{define alpha star}
\end{align}
Throughout the sequel, we call a standing wave a solution of $(\ref{FNLS})$ of the form $e^{i\omega t} \phi_\omega$, where $\omega \in \R$ is a frequency and $\phi_\omega \in H^s$ is a non-trivial solution to the elliptic equation 
\begin{align}
(-\Delta)^s \phi_\omega + \omega \phi_\omega - |\phi_\omega|^\alpha \phi_\omega =0. \label{elliptic equation omega}
\end{align}
We are interested in the instability of standing waves for $(\ref{FNLS})$. Before stating our main result, let us recall known results of orbital stability and instability of standing waves for $(\ref{general FNLS})$. In the case of Hartree-type nonlinearity $f(u)=-(|x|^{-\gamma} * |u|^2) u$, Wu \cite{Wu} showed the existence of stable standing waves for $d\geq 1$, $0<s<1$ and $0<\gamma<\min\{2s,d\}$. Zhang-Zhu \cite{ZhangZhu} extended the result of Wu and showed the strong instability of standing waves for $d\geq 2$, $0<s<1$ and $\gamma=2s$. Recently, Cho-Fall-Hajaiej-Markowich-Trabelsi \cite{ChoFallHajaiejMarkowichTrabelsi} studied the existence of stable standing waves for more general Hartree-type nonlinearities. In the case of Choquard nonlinearity $f(u)=-(I_\beta * |u|^p)|u|^{p-2}u$ with $d\geq 3$, $0<s<1$, $1+\frac{\beta}{d}<p<1+\frac{2s+\beta}{d}$ and
\[
I_\beta:= A(\beta) |x|^{-(d-\beta)}, \quad A(\beta) = \frac{\Gamma\left(\frac{d-\beta}{2} \right)}{\Gamma\left(\frac{\beta}{2}\right) \pi^{\frac{d}{2}} 2^\beta}, \quad 0<\beta<d,
\]
Feng-Zhang \cite{FengZhang-choquard} established the stability of standing waves under an assumption on the local well-posedness of the Cauchy problem $(\ref{general FNLS})$. In the case of combined power-type and Choquard nonlinearities $f(u)= - |u|^\alpha u - (I_\beta*|u|^p) |u|^{p-2}u$, Bhattarai \cite{Bhattarai} proved the existence of stable standing waves for $d\geq 2$, $0<s<1$, $0<\alpha<\frac{4s}{d}$ and $2 \leq p< 1+\frac{2s+\beta}{d}$. Recently, Feng-Zhang \cite{FengZhang-combined} showed the stability of standing waves for $d\geq 2$, $0<s<1$, $\alpha=\frac{4s}{d}$ and $1+\frac{\beta}{d}<p<1+\frac{2s+\beta}{d}$ under an assumption on the local theory of the Cauchy problem $(\ref{general FNLS})$ and an assumption on the initial data $\|u_0\|_{L^2}< \|Q\|_{L^2}$, where $Q$ is the ground state of
\begin{align}
(-\Delta)^s Q + Q - |Q|^{\frac{4s}{d}} Q =0. \label{elliptic equation introduction}
\end{align}
In the case of combined power-type nonlinearities $f(u)=-|u|^{\alpha_1}u - |u|^{\alpha_2}u$, Guo-Huang \cite{GuoHuang} showed the existence of stable standing waves for $0<\alpha_1<\alpha_2<\frac{4s}{d}$. Cho-Hwang-Hajaiej-Ozawa \cite{ChoHwangHajaiejOzawa} proved the stability of standing waves for more general subcritical nonlinearities. For $0<\alpha_1<\alpha_2=\frac{4s}{d}$, Zhu \cite{Zhu-combined}  showed the existence of stable standing waves with $\|u_0\|_{L^2}<\|Q\|_{L^2}$, where $Q$ is the ground state of $(\ref{elliptic equation introduction})$. In the case of logarithmic nonlinearity $f(u)=-u \log (|u|^2)$, Ardila \cite{Ardila} proved the existence of stable standing waves for $d\geq 2$ and $0<s<1$. 

In the case of a single power-type nonlinearity $(\ref{FNLS})$, Peng-Shi \cite{PengShi} recently established the existence of stable standing waves for $d\geq 2$, $0<s<1$ and $0<\alpha<\frac{4s}{d}$. They also proved the strong instablity of standing waves for $d\geq 2$, $\frac{1}{2}<s<1$ and $\alpha=\frac{4s}{d}$. Note that the local well-posedness is not considered in \cite{PengShi}. Due to loss of regularity in Strichartz estimates for the unitary group $e^{-it(-\Delta)^s}$, there are restrictions on the local well-posedness in $H^s$ for $(\ref{FNLS})$ with non-radial initial data (see e.g. \cite{HongSire} or \cite{Dinh-fract}). One can overcome the loss of derivatives in Strichartz estimates by considering radial initial data. However, it leads to another restriction on the validity of $d$ and $s$, that is, $d\geq 2$ and $\frac{d}{2d-1} \leq s<1$. We refer the reader to \cite{Dinh-mfnls} for the local well-posedness for $(\ref{FNLS})$ with $H^s$ radial initial data.

The main purpose of this paper is to show the strong instablity of ground state standing waves for $(\ref{FNLS})$ in the mass-supercritical and energy-subcritical case $\frac{4s}{d}<\alpha<\frac{4s}{d-2s}$. In order to state our main result, let us introduce the notion of ground states related to $(\ref{elliptic equation omega})$. 
\begin{definition} [Ground states] 
	A non-zero, non-negative $H^s$ solution $\phi_\omega$ to $(\ref{elliptic equation omega})$ is called a {\bf ground state related to $(\ref{elliptic equation omega})$} if it is a minimizer of the Weinstein's functional
	\begin{align}
	J(v):= \left[ \|v\|^{\frac{d\alpha}{2s}}_{\dot{H}^s} \|v\|^{\alpha+2-\frac{d\alpha}{2s}}_{L^2} \right] \div \|v\|^{\alpha+2}_{L^{\alpha+2}}, \label{weinstein functional}
	\end{align}
	that is, 
	\[
	J(\phi_\omega) = \inf \left\{ J(v) \ : \ v \in H^s \backslash \{0\} \right\}.
	\]
\end{definition}
Similarly, a non-zero, non-negative $H^s$ solution $\phi$ to the elliptic equation
\begin{align}
(-\Delta)^s \phi + \phi - |\phi|^\alpha \phi =0 \label{elliptic equation}
\end{align}
is called a ground state related to $(\ref{elliptic equation})$ if it is a minimizer of the Weinstein's functional $(\ref{weinstein functional})$. Note that for $d\geq 1$, $0<s<1$ and $0<\alpha<\alpha^\star$, the existence and uniqueness (up to symmetries) of ground states related to $(\ref{elliptic equation})$ were established recently in \cite{FrankLenzmann, FrankLenzmannSilvestre}. Moreover, the ground state related to $(\ref{elliptic equation})$ can be choosen to be radially symmetric, strictly positive and strictly decreasing in $|x|$. 

Now, let $\phi$ be the ground state related to $(\ref{elliptic equation})$. It is easy to see that for $\omega>0$, the scaling $\phi_\omega(x):= \omega^{\frac{1}{\alpha}} \phi \left(\omega^{\frac{1}{2s}} x\right)$ maps a solution of $(\ref{elliptic equation})$ to a solution of $(\ref{elliptic equation omega})$. We thus get a non-zero, non-negative $H^s$ solution to $(\ref{elliptic equation omega})$. On the other hand, a direct computation shows that
\begin{align*}
\|\phi_\omega\|^2_{L^2} = \omega^{\frac{2s}{\alpha}-\frac{d}{2}} \|\phi\|^2_{L^2}, \quad 
\|\phi_\omega\|^2_{\dot{H}^s} = \omega^{\frac{2s}{\alpha} +s -\frac{d}{2}} \|\phi\|^2_{\dot{H}^s}, \quad 
\|\phi_\omega\|^{\alpha+2}_{L^{\alpha+2}} = \omega^{\frac{s}{\alpha}(\alpha+2) - \frac{d}{2}} \|\phi\|^{\alpha+2}_{L^{\alpha+2}}.
\end{align*}
It follows that
\[
J(\phi_\omega) = \left[ \|\phi_\omega\|^{\frac{d\alpha}{2s}}_{\dot{H}^s} \|\phi_\omega\|^{\alpha+2-\frac{d\alpha}{2s}}_{L^2} \right] \div \|\phi_\omega\|^{\alpha+2}_{L^{\alpha+2}} = J(\phi).
\]
Thus, $\phi_\omega$ is a minimizer of the Weinstein's functional. By definition, $\phi_\omega$ is a ground state related to $(\ref{elliptic equation omega})$. Similarly, we can construct ground states related to $(\ref{elliptic equation})$ from ground states related to $(\ref{elliptic equation omega})$. This implies the existence and uniqueness (up to symmetries) of ground states related to $(\ref{elliptic equation omega})$ when $\omega>0$. Moreover, the unique (up to symmetries) ground state related to $(\ref{elliptic equation omega})$ can be choosen to be radially symmetric, strictly positive and strictly decreasing in $|x|$. 

It is easy to see that $(\ref{elliptic equation omega})$ can be written as $S'_\omega(\phi_\omega)=0$, where
\begin{align*}
S_\omega(v) &:= E(v)+ \frac{\omega}{2} \|v\|^2_{L^2} \\
&\mathrel{\phantom{:}}= \frac{1}{2} \|v\|^2_{\dot{H}^s} + \frac{\omega}{2} \|v\|^2_{L^2} -\frac{1}{\alpha+2} \|v\|^{\alpha+2}_{L^{\alpha+2}}
\end{align*}
is the action functional. We also define the Nehari functional 
\[
K_\omega(v):= \left.\partial_\lambda S_\omega(\lambda v) \right|_{\lambda=1} = \|v\|^2_{\dot{H}^s} + \omega \|v\|^2_{L^2} - \|v\|^{\alpha+2}_{L^{\alpha+2}}.
\]
From now on, we denote the functional
\[
H_\omega(v):= \|v\|^2_{\dot{H}^s} + \omega \|v\|^2_{L^2}. 
\]
It is easy to see that for $\omega>0$ fixed, 
\begin{align}
H_\omega(v) \sim \|v\|^2_{H^s}. \label{equivalent norm}
\end{align}
Let us start with the following observation concerning the ground state related to $(\ref{elliptic equation omega})$.
\begin{proposition} \label{proposition characterization ground state}
Let $d\geq 1$, $0< s <1$, $0<\alpha<\alpha^\star$ with $\alpha^\star$ as in $(\ref{define alpha star})$ and $\omega>0$. Let $\phi_\omega$ be the ground state related to $(\ref{elliptic equation omega})$. Then
\begin{align}
S_\omega(\phi_\omega) = \inf \left\{ S_\omega(v) \ : \ v \in H^s\backslash \{0\}, \ K_\omega(v)=0 \right\}. \label{property S_omega phi_omega}
\end{align}
\end{proposition}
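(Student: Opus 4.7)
The plan is to prove the two inequalities separately. The direction $S_\omega(\phi_\omega)\geq \inf\{S_\omega(v):v\in H^s\setminus\{0\},\,K_\omega(v)=0\}$ is immediate: testing the elliptic equation $(\ref{elliptic equation omega})$ against $\phi_\omega$ yields $K_\omega(\phi_\omega)=0$, so $\phi_\omega$ is admissible in the infimum on the right-hand side.

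For the reverse inequality, I would first observe that on the Nehari set the action reduces to a positive multiple of $H_\omega$: from $K_\omega(v)=0$ one has $\|v\|_{L^{\alpha+2}}^{\alpha+2}=H_\omega(v)$, whence
\begin{equation*}
S_\omega(v)=\frac{1}{2}H_\omega(v)-\frac{1}{\alpha+2}\|v\|_{L^{\alpha+2}}^{\alpha+2}=\frac{\alpha}{2(\alpha+2)}H_\omega(v),
\end{equation*}
and likewise for $\phi_\omega$. The problem thus reduces to showing $H_\omega(v)\geq H_\omega(\phi_\omega)$ for every admissible $v$. Writing $A=\|v\|_{\dot{H}^s}^2$, $B=\|v\|_{L^2}^2$, $\theta=d\alpha/(4s)$, $\sigma=(\alpha+2)/2-\theta$ (so $\theta,\sigma>0$ and $\theta+\sigma-1=\alpha/2>0$ under the hypotheses), the Weinstein inequality $J(v)\geq J(\phi_\omega)$ combined with $K_\omega(v)=0$ reads
\begin{equation*}
J(\phi_\omega)\,H_\omega(v)\leq A^\theta B^\sigma,
\end{equation*}
while the weighted arithmetic--geometric mean inequality applied to $A+\omega B$ with weights $\theta/(\theta+\sigma),\sigma/(\theta+\sigma)$ gives
\begin{equation*}
H_\omega(v)=A+\omega B\geq c_\omega\,(A^\theta B^\sigma)^{1/(\theta+\sigma)},
\end{equation*}
for an explicit constant $c_\omega>0$, with equality iff $\sigma A=\theta\omega B$. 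Eliminating $A^\theta B^\sigma$ between these two bounds and using $\theta+\sigma-1>0$ produces a uniform lower bound on $H_\omega(v)$.

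To finish, one must check that $\phi_\omega$ saturates both of the inequalities used above, and this is the delicate point of the argument. Equality in the Weinstein step is automatic, since $\phi_\omega$ is itself a Weinstein minimizer. Equality in the arithmetic--geometric mean step demands the Pohozaev-type identity $\sigma\|\phi_\omega\|_{\dot{H}^s}^2=\theta\omega\|\phi_\omega\|_{L^2}^2$, which I would extract by exploiting that $\phi_\omega$ is at once a minimizer of $J$ and a solution of $(\ref{elliptic equation omega})$. The Euler--Lagrange equation for $J$ at $\phi_\omega$ reads
\begin{equation*}
\frac{2\theta}{\|\phi_\omega\|_{\dot{H}^s}^2}(-\Delta)^s\phi_\omega+\frac{2\sigma}{\|\phi_\omega\|_{L^2}^2}\phi_\omega=\frac{\alpha+2}{\|\phi_\omega\|_{L^{\alpha+2}}^{\alpha+2}}|\phi_\omega|^\alpha\phi_\omega;
\end{equation*}
substituting $(-\Delta)^s\phi_\omega=|\phi_\omega|^\alpha\phi_\omega-\omega\phi_\omega$ from $(\ref{elliptic equation omega})$ and matching the coefficients of the linearly independent functions $\phi_\omega$ and $|\phi_\omega|^\alpha\phi_\omega$ produces precisely this identity. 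The main obstacle is exactly this coupling: neither the variational characterization nor the PDE alone fixes the norms of $\phi_\omega$, and both must be invoked to land on the equality locus of the arithmetic--geometric mean step.
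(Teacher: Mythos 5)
Your proof is correct, but it follows a genuinely different route from the paper's. The paper sets up the Nehari minimization $d(\omega)$, proves it is attained by running a minimizing sequence through the profile decomposition of Lemma \ref{lemma profile decomposition} (showing exactly one profile survives and lies on the Nehari manifold), identifies the minimizer as a Weinstein minimizer via a Lagrange multiplier computation and the Pohozaev identities, and finally invokes the uniqueness (up to symmetries) of ground states to conclude $S_\omega(V^1)=S_\omega(\phi_\omega)$. You instead prove the two inequalities directly: the easy direction from $K_\omega(\phi_\omega)=0$, and the reverse by reducing $S_\omega$ on the Nehari set to $\frac{\alpha}{2(\alpha+2)}H_\omega$, then pinching $H_\omega(v)$ between the sharp Gagliardo--Nirenberg (Weinstein) bound $J(\phi_\omega)H_\omega(v)\leq A^\theta B^\sigma$ and the weighted AM--GM bound $H_\omega(v)\geq c_\omega (A^\theta B^\sigma)^{1/(\theta+\sigma)}$; since $\theta+\sigma-1=\alpha/2>0$ this yields a uniform lower bound, and the equality case of AM--GM is exactly the Pohozaev identity $(\ref{pohozaev identities elliptic equation omega})$, so $\phi_\omega$ saturates it. I checked the exponent bookkeeping: $\sigma=1-\tfrac{(d-2s)\alpha}{4s}=\theta\cdot\tfrac{4s-(d-2s)\alpha}{d\alpha}$, so the equality condition $\sigma A=\theta\omega B$ is indeed $(\ref{pohozaev identities elliptic equation omega})$, and your derivation of it by combining the Euler--Lagrange equation of $J$ with $(\ref{elliptic equation omega})$ is sound (it needs only that $\phi_\omega$ and $|\phi_\omega|^\alpha\phi_\omega$ are linearly independent, which holds since the ground state is positive, radially decreasing and vanishes at infinity; alternatively you could simply cite $(\ref{pohozaev identities elliptic equation omega})$, which the paper records). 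Your argument is shorter and avoids compactness entirely — no profile decomposition and no appeal to uniqueness of ground states — at the cost of leaning more heavily on the sharp constant in the Gagliardo--Nirenberg inequality; the paper's argument is heavier but is the template it reuses elsewhere and shows attainment of the Nehari infimum by a soft compactness mechanism, whereas in your proof attainment comes for free because $\phi_\omega$ itself realizes the infimum.
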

We refer the reader to Section $\ref{section characterization ground state}$ for the proof of the above result. We next recall the definition of the strong instability of standing waves.
\begin{definition} [Strong instability] \label{definition strong instability}
A standing wave $e^{i\omega t} \phi_\omega$ is strongly unstable if for any $\eps>0$, there exists $u_0 \in H^s$ such that $\|u_0 - \phi_\omega\|_{H^s} <\eps$ and the solution $u(t)$ to $(\ref{FNLS})$ with initial data $u_0$ blows up in finite time.
\end{definition}
Our main result in this paper reads as follows.
\begin{theorem} \label{theorem strong instability}
Let $d\geq 2$, $\frac{d}{2d-1} \leq s <1$, $\frac{4s}{d} <\alpha< \frac{4s}{d-2s}$, $\alpha<4s$, $\omega>0$ and $\phi_\omega$ be the ground state related to $(\ref{elliptic equation omega})$. Then the ground state standing wave $e^{i\omega t} \phi_\omega$ is strongly unstable.
\end{theorem}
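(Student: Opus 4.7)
The plan is to follow the Berestycki--Cazenave--Ohta scheme for strong instability, adapted to the fractional setting, and to close it via a localized radial virial estimate. Consider the $L^2$-preserving scaling
\[
u_0^\lambda(x) := \lambda^{d/2}\, \phi_\omega(\lambda x), \qquad \lambda > 1,
\]
which preserves radial symmetry, satisfies $\|u_0^\lambda\|_{L^2} = \|\phi_\omega\|_{L^2}$ and converges to $\phi_\omega$ in $H^s$ as $\lambda \to 1^+$. Setting $f(\lambda) := S_\omega(u_0^\lambda)$ and using the Pohozaev identity $(d-2s)\|\phi_\omega\|_{\dot H^s}^2 + d\omega \|\phi_\omega\|_{L^2}^2 = \tfrac{2d}{\alpha+2} \|\phi_\omega\|_{L^{\alpha+2}}^{\alpha+2}$ together with $K_\omega(\phi_\omega) = 0$, a direct computation gives $f'(1) = 0$ and $f''(1) < 0$, so $S_\omega(u_0^\lambda) < S_\omega(\phi_\omega)$ for $\lambda > 1$ close to $1$. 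The same computation yields $K_\omega(u_0^\lambda) = 0$ at $\lambda = 1$ with derivative $(2s - \tfrac{d\alpha}{2}) \|\phi_\omega\|_{\dot H^s}^2 - \tfrac{d\alpha\omega}{2} \|\phi_\omega\|_{L^2}^2 < 0$ there (using $\alpha > 4s/d$ and $\omega > 0$), whence $K_\omega(u_0^\lambda) < 0$ for such $\lambda$.

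These two properties place $u_0^\lambda$ inside
\[
\mathcal{B} := \bigl\{ v \in H^s_{\mathrm{rad}} \setminus \{0\}\, :\ S_\omega(v) < S_\omega(\phi_\omega),\ K_\omega(v) < 0,\ \|v\|_{L^2} = \|\phi_\omega\|_{L^2} \bigr\},
\]
which is invariant under the flow of $(\ref{FNLS})$: mass and energy conservation preserve the first and third conditions, and if $K_\omega(u(t_0)) = 0$ held for some $t_0$, Proposition $\ref{proposition characterization ground state}$ would force $S_\omega(u(t_0)) \geq S_\omega(\phi_\omega)$, contradicting conservation of $S_\omega$. Set
\[
G_\omega(v) := s \|v\|_{\dot H^s}^2 - \tfrac{d\alpha}{2(\alpha+2)}\, \|v\|_{L^{\alpha+2}}^{\alpha+2}.
\]
An appropriate linear combination of the definitions of $S_\omega$ and $K_\omega$ yields the identity
\[
G_\omega(v) = \tfrac{2s(\alpha+2) - d\alpha}{\alpha}\, S_\omega(v) + \tfrac{d\alpha - 4s}{2\alpha}\, K_\omega(v) - s\omega \|v\|_{L^2}^2,
\]
with both numerical coefficients strictly positive in the range $\frac{4s}{d} < \alpha < \frac{4s}{d-2s}$. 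Evaluating this identity at $\phi_\omega$ (where $G_\omega(\phi_\omega) = K_\omega(\phi_\omega) = 0$, the former being the Pohozaev relation) gives $\tfrac{2s(\alpha+2) - d\alpha}{\alpha}\, S_\omega(\phi_\omega) = s\omega \|\phi_\omega\|_{L^2}^2$. Substituting back, for every $v \in \mathcal{B}$ one obtains
\[
G_\omega(v) \leq \tfrac{2s(\alpha+2) - d\alpha}{\alpha}\bigl( S_\omega(v) - S_\omega(\phi_\omega) \bigr) =: -\delta < 0,
\]
and $\delta$ is preserved along the flow by the conservation laws.

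Finally, fix a smooth radial cutoff $\chi_R(x) = R^2 \chi(x/R)$ with $\chi(x) = |x|^2$ on $|x| \leq 1$, and define $M_R(t) := \int \chi_R(x)\, |u(t,x)|^2\, dx$. The localized virial identity for radial solutions of $(\ref{FNLS})$, together with commutator estimates for $(-\Delta)^s$ and the radial Sobolev embedding (both available when $d \geq 2$ and $\frac{d}{2d-1} \leq s < 1$), produces an inequality of the form
\[
M_R''(t) \leq 8\, G_\omega(u(t)) + C\, R^{-2s}\, \|u_0\|_{L^2}^2 + C\, R^{-\theta}\bigl( 1 + \|u(t)\|_{H^s}^{\beta} \bigr)
\]
for some $\theta, \beta > 0$; the extra hypothesis $\alpha < 4s$ is precisely what permits the nonlinear remainder to be closed by interpolation. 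Combining the uniform bound $G_\omega(u(t)) \leq -\delta$ from the previous step with a sufficiently large $R$ yields $M_R''(t) \leq -4\delta$ on any interval on which $\|u(t)\|_{H^s}$ stays bounded. Since $M_R(t) \geq 0$, a double integration produces a contradiction, forcing a finite maximal time of existence. As $u_0^\lambda \to \phi_\omega$ in $H^s$ when $\lambda \to 1^+$, Definition $\ref{definition strong instability}$ is satisfied.

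The main obstacle is the localized virial inequality in the last step: because $(-\Delta)^s$ is nonlocal, inserting the spatial weight $\chi_R$ requires delicate commutator bounds (e.g.\ in the spirit of Boulenger--Himmelsbach--Lenzmann, via the Balakrishnan formula or the Caffarelli--Silvestre extension), and the nonlinear remainder must be absorbed using the radial Sobolev embedding. These ingredients are precisely what dictate the technical hypotheses $d \geq 2$, $\frac{d}{2d-1} \leq s < 1$, and $\alpha < 4s$ appearing in Theorem~\ref{theorem strong instability}.
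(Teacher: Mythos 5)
Your variational setup is correct and in one place genuinely simpler than the paper's. The scaling $u_0^\lambda=\lambda^{d/2}\phi_\omega(\lambda\cdot)$, the computation $f'(1)=0$, $f''(1)<0$, and the sign of $\partial_\lambda K_\omega(u_0^\lambda)|_{\lambda=1}$ all check out, and your linear-combination identity
\[
G_\omega(v) = \tfrac{2s(\alpha+2)-d\alpha}{\alpha}\, S_\omega(v) + \tfrac{d\alpha-4s}{2\alpha}\, K_\omega(v) - s\omega \|v\|_{L^2}^2
\]
is verified directly (note $2s(\alpha+2)-d\alpha=4s-(d-2s)\alpha$, so evaluating at $\phi_\omega$ reproduces $(\ref{pohozaev identities elliptic equation omega})$). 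By adding the conserved mass constraint to the invariant set you get the uniform bound $G_\omega(u(t))\leq-\delta$ as a one-line algebraic consequence, and you only need the Nehari characterization of Proposition \ref{proposition characterization ground state}. The paper instead works with the set $\{S_\omega<S_\omega(\phi_\omega),\ I<0\}$ without a mass constraint, proves a second variational characterization in terms of $I$ (Lemma \ref{lemma characterization supercritical}) to get invariance, and derives $I(v)\leq 2s(S_\omega(v)-S_\omega(\phi_\omega))$ from a differential inequality in the scaling parameter (Lemma \ref{lemma key estimate}). Your route buys a shorter derivation of the key negativity; the paper's buys a statement valid without fixing the mass.

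The blow-up step, however, has a genuine gap. For the fractional Laplacian the quantity $M_R(t)=\int\chi_R|u|^2\,dx$ does \emph{not} satisfy $M_R'(t)=2\int\nabla\chi_R\cdot\im{(\overline{u}\nabla u)}\,dx$ (since $\im{(\overline{u}(-\Delta)^su)}$ is not a local divergence), so there is no nonnegative quantity whose second time-derivative is controlled by the Boulenger--Himmelsbach--Lenzmann estimate. What that estimate controls is the \emph{first} derivative of the virial action $M_{\varphi_R}(u(t))=2\int\nabla\varphi_R\cdot\im{(\overline{u}\nabla u)}\,dx$, which has no sign, so the Glassey-type ``double integration against $M_R\geq 0$'' contradiction is not available. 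Concluding finite-time blow-up from $\frac{d}{dt}M_{\varphi_R}(u(t))\leq -b\|u(t)\|^2_{\dot H^s}$ requires the additional nonlinear ODE argument of the paper's Step 3: one bounds $|M_{\varphi_R}(u(t))|\lesssim\|u(t)\|^{1/s}_{\dot H^s}$ (using $\|u(t)\|_{\dot H^s}\gtrsim 1$, which itself must be proved from $I(u(t))\leq-a$ and Gagliardo--Nirenberg), deduces $z'\geq A^{2s}z^{2s}$ with $2s>1$, and concludes $T<\infty$. Moreover, your estimate is asserted only ``on any interval on which $\|u(t)\|_{H^s}$ stays bounded,'' which is circular: a global solution with unbounded $\dot H^s$-norm would escape your argument. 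The paper removes this caveat by absorbing the remainder $R^{-\frac{\alpha(d-1)}{2}+\eps s}\|u(t)\|^{\frac{\alpha}{2s}+\eps}_{\dot H^s}$ into $-2(d\alpha-4s)\|u(t)\|^2_{\dot H^s}$ via Young's inequality (this is exactly where $\alpha<4s$ is used, so that $\frac{\alpha}{2s}+\eps\leq 2$), with a two-case analysis according to the size of $\|u(t)\|^2_{\dot H^s}$, yielding a differential inequality uniform in $t$. These three points --- the correct choice of monotone quantity, the uniform absorption of the error, and the nonlinear ODE closing argument --- are the missing content of your last step.
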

Note that the condition $\alpha<4s$ is technical due to the localized virial estimate (see Remark $\ref{lemma localized virial estimate}$). However, this only leads to a restriction in the two dimensional case, that is, $\frac{2}{3} \leq s <1$ and $2s <\alpha <4s$. 

To our knowledge, the usual strategy to show the strong instability of standing waves is to use the variational characterization of the ground states as minimizers of the action functional and the virial identity, namely
\[
\frac{d}{dt} \|x u(t)\|^2_{L^2} = 4 \im{\left( \int \overline{u}(t) x\cdot \nabla u(t) dx \right)}.
\]
However, in the case $0<s<1$, there is no such virial identity. To overcome this difficulty, we use localized virial estimates for radial solutions of $(\ref{FNLS})$. These localized virial estimates were proved by Boulenger-Himmelsbach-Lenzmann \cite{BoulengerHimmelsbachLenzmann} to show the existence of radial blow-up solutions for $(\ref{FNLS})$ in the mass-critical and mass-supercritical cases. As far as we know, this paper seems to be the first one dealing with the strong instability of standing waves for the fractional nonlinear Schr\"odinger equation with power-type nonlinearity in the mass-supercritical and energy-subcritical regime. The method used to prove Theorem $\ref{theorem strong instability}$ is robust, and can be applied to show the strong instability of standing waves for the fractional nonlinear Schr\"odinger equation with other type of nonlinearities, such as Hartree, Choquard, combined power-type,... A similar approach based on localized virial estimates is used in \cite{BensouilahDinh} to show the strong instability of radial standing waves for the nonlinear Schr\"odinger equation with inverse-square potential in the mass-supercritical and energy-subcritical case. 

The paper is oganized as follows. In Section $\ref{section preliminary}$, we recall basic tools needed in the sequel such as the sharp Gagliardo-Nirenberg inequality, the Pohozaev's identities and the profile decomposition. In Section $\ref{section characterization ground state}$, we show the characterization of the ground state related to $(\ref{elliptic equation omega})$ given in Proposition $\ref{proposition characterization ground state}$. We will give the proof of the strong instablity of the ground state standing waves given in Theorem $\ref{theorem strong instability}$ in Section $\ref{section strong instability}$.

\section{Preliminaries} \label{section preliminary}
\setcounter{equation}{0}
Let us recall some basic tools related to $(\ref{FNLS})$ which are needed in this paper. Let us start with the sharp Gagliardo-Nirenberg inequality. 
\begin{lemma}[Sharp Gagliardo-Nirenberg inequality \cite{BoulengerHimmelsbachLenzmann, FrankLenzmann, FrankLenzmannSilvestre}] \label{lemma gagliardo-nirenberg inequality}
Let $d\geq 1$, $0<s<1$, $0<\alpha <\alpha^\star$ with $\alpha^\star$ as in $(\ref{define alpha star})$. Then for any $u \in H^s$,
\begin{align}
\|u\|^{\alpha+2}_{L^{\alpha+2}} \leq C_{\emph{opt}} \|u\|^{\frac{d\alpha}{2s}}_{\dot{H}^s} \|u\|_{L^2}^{\alpha+2-\frac{d\alpha}{2s}}, \label{sharp gagliardo-nirenberg inequality}
\end{align}
where the optimal constant $C_{\emph{opt}}$ is given by
\[
C_{\text{opt}} = \left(\frac{2s(\alpha+2)-d\alpha}{d\alpha} \right)^{\frac{d\alpha}{4s}} \frac{2s(\alpha+2)}{2s (\alpha+2) - d\alpha} \frac{1}{\|Q\|_{L^2}^\alpha}.
\]
Here $Q$ is the unique (up to symmetries) positive radial solution to the elliptic equation
\[
(-\Delta)^s Q + Q - |Q|^\alpha Q=0. 
\]
Moreover, the following Pohozaev's identities hold true:
\begin{align}
\|Q\|^2_{L^2} = \frac{4s-(d-2s)\alpha}{d\alpha} \|Q\|^2_{\dot{H}^s} = \frac{4s-(d-2s)\alpha}{2s(\alpha+2)} \|Q\|^{\alpha+2}_{L^{\alpha+2}}. \label{pohozaev identities elliptic equation}
\end{align}
\end{lemma}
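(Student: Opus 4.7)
The plan is to derive both claims from the existence and uniqueness (up to translations, phase, and scaling) of the positive radial ground state $Q$ of $(-\Delta)^s Q + Q - |Q|^\alpha Q = 0$, which is the content of the cited works of Frank--Lenzmann and Frank--Lenzmann--Silvestre. Granting this, the sharp constant and the Pohozaev identities will both be extracted by a pair of scaling computations applied to the action functional $S(v) = \tfrac{1}{2}\|v\|_{\dot H^s}^2 + \tfrac{1}{2}\|v\|_{L^2}^2 - \tfrac{1}{\alpha+2}\|v\|_{L^{\alpha+2}}^{\alpha+2}$, whose critical points are exactly the $H^s$ solutions of the elliptic equation.

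To establish $(\ref{pohozaev identities elliptic equation})$, I would first pair the elliptic equation with $Q$ and integrate to obtain the Nehari relation
\[
\|Q\|_{\dot H^s}^2 + \|Q\|_{L^2}^2 = \|Q\|_{L^{\alpha+2}}^{\alpha+2}.
\]
Then I would consider the $L^2$-preserving rescaling $Q_\lambda(x) := \lambda^{d/2} Q(\lambda x)$, which satisfies $\|Q_\lambda\|_{L^2} = \|Q\|_{L^2}$, $\|Q_\lambda\|_{\dot H^s}^2 = \lambda^{2s}\|Q\|_{\dot H^s}^2$, and $\|Q_\lambda\|_{L^{\alpha+2}}^{\alpha+2} = \lambda^{d\alpha/2}\|Q\|_{L^{\alpha+2}}^{\alpha+2}$. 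Since $S'(Q) = 0$, differentiating $\lambda \mapsto S(Q_\lambda)$ at $\lambda = 1$ gives
\[
2s\,\|Q\|_{\dot H^s}^2 \;=\; \frac{d\alpha}{\alpha+2}\,\|Q\|_{L^{\alpha+2}}^{\alpha+2}.
\]
Solving the resulting $2\times 2$ linear system for $\|Q\|_{\dot H^s}^2$ and $\|Q\|_{L^{\alpha+2}}^{\alpha+2}$ in terms of $\|Q\|_{L^2}^2$ delivers $(\ref{pohozaev identities elliptic equation})$ after elementary algebra.

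For the sharp inequality $(\ref{sharp gagliardo-nirenberg inequality})$, I would show that the Weinstein functional $J$ attains its infimum on $H^s\setminus\{0\}$; the existence of a minimizer follows from a profile decomposition / concentration-compactness argument (a version of which will be invoked in Section $\ref{section preliminary}$ in any case), using that $J$ is invariant under the symmetries $v \mapsto \mu\, v(\lambda \cdot + x_0)$ so that a minimizing sequence may be renormalized to have fixed $L^2$- and $\dot H^s$-norms. Any minimizer satisfies a Lagrange-multiplier equation that reduces, after an appropriate rescaling $v(x) \mapsto \mu\, v(\lambda x)$, to $(-\Delta)^s Q + Q - |Q|^\alpha Q = 0$. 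By the uniqueness (up to symmetries) of the positive radial ground state, the minimizer coincides with $Q$ up to the invariances of $J$, and hence $C_{\text{opt}} = 1/J(Q)$. Plugging in the identities $(\ref{pohozaev identities elliptic equation})$ to eliminate $\|Q\|_{\dot H^s}$ and $\|Q\|_{L^{\alpha+2}}$ in favor of $\|Q\|_{L^2}$ produces the stated closed-form expression for $C_{\text{opt}}$.

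The main obstacle is genuinely the existence and uniqueness of the ground state for the nonlocal operator $(-\Delta)^s$: standard symmetrization and ODE arguments from the local case are unavailable, and compactness of minimizing sequences modulo translation is delicate because of the lack of an exact virial identity in the fractional setting. However, this is exactly what the references \cite{FrankLenzmann, FrankLenzmannSilvestre} (together with \cite{BoulengerHimmelsbachLenzmann} for the sharp-constant form) supply, so the remaining work in the proof is just the bookkeeping of scaling exponents outlined above.
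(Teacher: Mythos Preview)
Your proposal is correct and in fact goes further than the paper itself: the paper does not prove this lemma at all but simply refers the reader to \cite{FrankLenzmann}, \cite[Proposition~3.1, Theorem~3.4]{FrankLenzmannSilvestre}, and \cite[Appendix]{BoulengerHimmelsbachLenzmann}. Your sketch---Nehari relation plus the $L^2$-invariant scaling $Q_\lambda(x)=\lambda^{d/2}Q(\lambda x)$ to obtain the Pohozaev identities, and minimization of the Weinstein functional $J$ together with the cited uniqueness to identify $C_{\text{opt}}=1/J(Q)$---is exactly the standard derivation recorded in those references (see in particular \cite[Appendix]{BoulengerHimmelsbachLenzmann} for the explicit constant), so there is no discrepancy in approach, only in level of detail.
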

We refer the reader to \cite{FrankLenzmann}, \cite[Proposition 3.1, Theorem 3.4]{FrankLenzmannSilvestre} and \cite[Appendix]{BoulengerHimmelsbachLenzmann} for the proof of the above result. In the same spirit as $(\ref{pohozaev identities elliptic equation})$, we have the following Pohozaev's identities associated to $(\ref{elliptic equation omega})$:
\begin{align}
\omega \|\phi_\omega\|^2_{L^2} = \frac{4s-(d-2s)\alpha}{d\alpha} \|\phi_\omega\|^2_{\dot{H}^s} = \frac{4s-(d-2s)\alpha}{2s(\alpha+2)} \|\phi_\omega\|^{\alpha+2}_{L^{\alpha+2}}. \label{pohozaev identities elliptic equation omega}
\end{align}
	
We next recall the profile decomposition of bounded $H^s$ sequences. 
\begin{lemma} [Profile decomposition \cite{Dinh-mfnls, Zhu}] \label{lemma profile decomposition}
Let $d\geq 1$ and $0<s<1$. Let $(v_n)_{n\geq 1}$ be a bounded sequence in $H^s$. Then there exist a subsequence of $(v_n)_{n\geq 1}$ (still denoted $(v_n)_{n\geq 1}$), a family $(x^j_n)_{n\geq 1}$ of sequences in $\R^d$ and a sequence $(V^j)_{j\geq 1}$ of $H^s$ functions such that
\begin{itemize}
	\item for every $k \ne j$, 
	\begin{align}
	|x^k_n - x^j_n| \rightarrow \infty, \label{orthogonality}
	\end{align} 
	as $n\rightarrow \infty$;
	\item for every $l \geq 1$ and every $x \in \R^d$,
	\begin{align}
	v_n(x) = \sum_{j=1}^l V^j(x-x^j_n) + v^l_n(x), \label{profile decomposition}
	\end{align}
	with 
	\begin{align}
	\limsup_{n\rightarrow \infty} \|v^l_n\|_{L^q} \rightarrow 0, \label{profile error}
	\end{align}
	as $l \rightarrow \infty$ for every $q \in (2, 2+ \alpha^\star)$ with $\alpha^\star$ as in $(\ref{define alpha star})$. 
	\end{itemize}
Moreover, for every $l\geq 1$, the following expansions hold true:
\begin{align}
\|v_n\|^2_{L^2} &= \sum_{j=1}^l \|V^j\|^2_{L^2} + \|v^l_n\|^2_{L^2} + o_n(1), \label{L2 expansion} \\
\|v_n\|^2_{\dot{H}^s} &= \sum_{j=1}^l \|V^j\|^2_{\dot{H}^s} + \|v^l_n\|^2_{\dot{H}^s} + o_n(1), \label{dot Hs expansion} \\
\|v_n\|^{\alpha+2}_{L^{\alpha+2}} &= \sum_{j=1}^l \|V^j\|^{\alpha+2}_{L^{\alpha+2}} + \|v^l_n\|^{\alpha+2}_{L^{\alpha+2}} + o_n(1), \label{lebesgue norm expansion}
\end{align}
as $n\rightarrow \infty$.
\end{lemma}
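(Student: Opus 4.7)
The plan is to follow the concentration-compactness / profile decomposition strategy of G\'erard and Hmidi--Keraani developed for $H^1$, suitably adapted to the fractional Sobolev space $H^s$. The only genuinely new ingredient beyond the $H^1$ case is a refined Sobolev-type inequality of the form
\[
\|v\|_{L^q} \leq C \|v\|_{H^s}^{\theta} \|v\|_{\dot{B}^{-\sigma}_{\infty,\infty}}^{1-\theta}
\]
for $q \in (2, 2+\alpha^\star)$ with appropriate $\sigma>0$ and $\theta \in (0,1)$, which is the fractional analogue of the classical G\'erard--Meyer--Oru inequality. Once this embedding is in hand, the proof is essentially mechanical.

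\emph{Step 1: Extract the first profile.} Set
\[
\eta_1 := \limsup_{n \to \infty} \sup_{x \in \R^d} \int_{B(x,1)} |v_n(y)|^2 \, dy.
\]
If $\eta_1 = 0$, a Lions-type vanishing argument combined with the refined inequality above yields $\|v_n\|_{L^q} \to 0$ for every $q \in (2, 2+\alpha^\star)$, and the conclusion holds with $V^j \equiv 0$ for all $j$. Otherwise, pick $x_n^1 \in \R^d$ essentially attaining the supremum; along a subsequence $v_n(\cdot + x_n^1) \rightharpoonup V^1$ weakly in $H^s$, and Rellich compactness on the unit ball gives $\|V^1\|_{L^2}^2 \gtrsim \eta_1 > 0$. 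Define the residue $v_n^1(x) := v_n(x) - V^1(x - x_n^1)$; weak convergence directly yields the $L^2$ and $\dot{H}^s$ Pythagorean identities (\ref{L2 expansion}) and (\ref{dot Hs expansion}) at level $l=1$.

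\emph{Step 2: Iterate and establish orthogonality.} Apply Step 1 to $(v_n^1)$ to produce $V^2$ and $x_n^2$, then to $(v_n^2)$, and so on, recording at each stage a concentration constant $\eta_j$. The asymptotic orthogonality (\ref{orthogonality}) follows by contradiction: if $x_n^k - x_n^j \to z \in \R^d$ along a subsequence for some $j<k$, then computing the weak $H^s$-limit of $v_n^{j-1}(\cdot + x_n^k)$ in two ways forces $V^k \equiv 0$, contradicting $\|V^k\|_{L^2}^2 \gtrsim \eta_k > 0$. Induction on $l$, combined with weak convergence, then delivers (\ref{profile decomposition}) and the Pythagorean expansions (\ref{L2 expansion})--(\ref{dot Hs expansion}) at every finite level.

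\emph{Step 3: Remainder smallness and $L^{\alpha+2}$ expansion.} The $L^2$ Pythagorean identity and the boundedness of $\|v_n\|_{L^2}$ give $\sum_{j \geq 1} \|V^j\|_{L^2}^2 < \infty$, hence $\|V^j\|_{L^2} \to 0$. The construction then forces $\eta_l \to 0$ as $l \to \infty$, and a second application of the refined Sobolev embedding to $v_n^l$ produces (\ref{profile error}). Finally, (\ref{lebesgue norm expansion}) is obtained by a Brezis--Lieb type computation: expanding $\|v_n\|_{L^{\alpha+2}}^{\alpha+2}$ with the decomposition (\ref{profile decomposition}), the orthogonality (\ref{orthogonality}) makes all cross terms involving two distinct profiles vanish as $n \to \infty$ (by dominated convergence after a density/cut-off argument), while (\ref{profile error}) absorbs the mixed terms involving the remainder $v_n^l$. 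The main technical obstacle is the fractional refined Sobolev inequality itself, whose $0<s<1$ version is more delicate than the integer-order case but is now available in the literature invoked in \cite{Dinh-mfnls, Zhu}.
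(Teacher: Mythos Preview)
Your proposal is correct and follows essentially the same route the paper points to: the paper does not supply its own proof but refers to \cite[Proposition 2.3]{Zhu} and \cite[Theorem 3.1]{Dinh-mfnls}, noting that the argument is the fractional adaptation of Hmidi--Keraani \cite[Proposition 3.1]{HmidiKeraani}, which is precisely the Lions-type profile extraction with a refined Sobolev/Besov embedding that you outline. Your sketch matches that strategy in all essential respects.
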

We refer the reader to \cite[Proposition 2.3]{Zhu} or \cite[Theorem 3.1]{Dinh-mfnls} for the proof of this result which is similar to the one proved by Hmidi-Keraani \cite[Proposition 3.1]{HmidiKeraani}. 
\begin{remark}
The number of non-zero terms in $(V^j)_{j\geq 1}$ may be one, finite or infinite, which may correspond to compactness, dichotomy, and vanishing, respectively, in the concentration compactness principle proposed by Lions \cite{Lions}. The profile decomposition given in Lemma $\ref{lemma profile decomposition}$ may look as another equivalent description of the concentration-compactness principle. However, there are two major advantages: one is that we can inject the decomposing expression $(\ref{profile decomposition})$ into our aim functionals, and the other is that the decomposition is orthogonal by $(\ref{orthogonality})$ and norms of $(v_n)_{n\geq 1}$ have similar decompositions $(\ref{L2 expansion})-(\ref{lebesgue norm expansion})$. These properties are useful in the calculus of variational methods.		
\end{remark}

\section{Characterization of the ground state} \label{section characterization ground state}
\setcounter{equation}{0}
In this section, we give the proof of the characterization of the ground state given in Proposition $\ref{proposition characterization ground state}$.

\noindent {\it Proof of Proposition $\ref{proposition characterization ground state}$.} The proof is done by several steps. 
	
\noindent {\bf Step 1.} We first show that the minimizing problem 
\begin{align}
d(\omega):=\inf \left\{ S_\omega(v) \ : \ v \in H^s\backslash \{0\}, \ K_\omega(v)=0 \right\} \label{minimizing problem}
\end{align}
is attained and $d(\omega)>0$. The later fact is easy to see. Indeed, let $v \in H^s \backslash \{0\}$ be such that $K_\omega(v)=0$. By the Sobolev embedding, the equivalent norm $(\ref{equivalent norm})$ and the fact $H_\omega(v) = \|v\|^{\alpha+2}_{L^{\alpha+2}}$, we have
\[
\|v\|^2_{L^{\alpha+2}} \leq C_1 \|v\|^2_{H^s} \leq C_2 H_\omega(v) = C_2 \|v\|^{\alpha+2}_{L^{\alpha+2}},
\]
for some $C_1, C_2>0$. This implies that
\[
S_\omega(v) = \frac{1}{2} H_\omega(v) - \frac{1}{\alpha+2} \|v\|^{\alpha+2}_{L^{\alpha+2}} = \frac{\alpha}{2(\alpha+2)} \|v\|^{\alpha+2}_{L^{\alpha+2}}  \geq \frac{\alpha}{2(\alpha+2)} \left(\frac{1}{C_2} \right)^{\frac{\alpha+2}{\alpha}}.
\]
Taking the infimum over $v$, we get $d(\omega) >0$. We now show the minimizing problem $(\ref{minimizing problem})$ is attained. Indeed, let $(v_n)_{n\geq 1}$ be a minimizing sequence of $d(\omega)$, i.e. $v_n \in H^s\backslash \{0\}, K_\omega(v_n) =0$ and $S_\omega(v_n) \rightarrow d(\omega)$ as $n\rightarrow \infty$. Since $K_\omega(v_n) =0$, we have $H_\omega(v_n)= \|v_n\|^{\alpha+2}_{L^{\alpha+2}}$ for any $n\geq 1$. The fact $S_\omega(v_n) \rightarrow d(\omega)$ as $n\rightarrow \infty$ implies that
\[
\frac{\alpha}{2(\alpha+2)} \|v_n\|^{\alpha+2}_{L^{\alpha+2}} = \frac{\alpha}{2(\alpha+2)} H_\omega(v_n) \rightarrow d(\omega),
\]
as $n\rightarrow \infty$. We infer that there exists $C>0$ such that
\[
H_\omega(v_n) \leq \frac{2(\alpha+2)}{\alpha} d(\omega) + C,
\]
for all $n\geq 1$. By $(\ref{equivalent norm})$, it follows that $(v_n)_{n\geq 1}$ is a bounded sequence in $H^s$. Thanks to the profile decomposition given in Lemma $\ref{lemma profile decomposition}$, there exist a subsequence still denoted by $(v_n)_{n\geq 1}$, a family $(x^j_n)_{n\geq 1}$ of sequences in $\R^d$ and a sequence $(V^j)_{j\geq 1}$ of $H^s$-functions such that for every $l\geq 1$ and every $x \in \R^d$,
\[
v_n(x) = \sum_{j=1}^l V^j(x-x^j_n) + v^l_n(x),
\]
and $(\ref{profile error})-(\ref{lebesgue norm expansion})$ hold. We have from $(\ref{L2 expansion})$ and $(\ref{dot Hs expansion})$ that 
\[
H_\omega(v_n) = \sum_{j=1}^l H_\omega (V^j) + H_\omega(v^l_n) + o_n(1),
\]
as $n\rightarrow \infty$. This implies that
\begin{align*}
K_\omega(v_n) &= H_\omega(v_n) - \|v_n\|^{\alpha+2}_{L^{\alpha+2}} \\
&= \sum_{j=1}^l H_\omega(V^j) + H_\omega(v^l_n) - \|v_n\|^{\alpha+2}_{L^{\alpha+2}} + o_n(1) \\
&= \sum_{j=1}^l K_\omega(V^j) + \sum_{j=1}^l \|V^j\|^{\alpha+2}_{L^{\alpha+2}} - \|v_n\|^{\alpha+2}_{L^{\alpha+2}} + H_\omega(v^l_n) + o_n(1).
\end{align*}
Since $K_\omega(v_n) =0$, $\|v_n\|^{\alpha+2}_{L^{\alpha+2}} \rightarrow \frac{2(\alpha+2)}{\alpha} d(\omega)$ as $n\rightarrow \infty$ and $H_\omega(v^l_n) \geq 0$ for all $n\geq 1$, we infer that
\begin{align}
\sum_{j=1}^l K_\omega(V^j) + \sum_{j=1}^l \|V^j\|^{\alpha+2}_{L^{\alpha+2}} - \frac{2(\alpha+2)}{\alpha} d(\omega) \leq 0, \label{limit estimate 1}
\end{align}
or equivalently,
\begin{align}
\sum_{j=1}^l H_\omega(V^j) - \frac{2(\alpha+2)}{\alpha} d(\omega) \leq 0. \label{limit estimate 2}
\end{align}
On the other hand, by $(\ref{profile error})$ and $(\ref{lebesgue norm expansion})$, we have
\begin{align}
\frac{2(\alpha+2)}{\alpha} d(\omega) = \lim_{n\rightarrow \infty} \|v_n\|^{\alpha+2}_{L^{\alpha+2}} = \sum_{j=1}^\infty \|V^j\|^{\alpha+2}_{L^{\alpha+2}}. \label{limit estimate 3}
\end{align}
Combining $(\ref{limit estimate 1})$, $(\ref{limit estimate 2})$ and $(\ref{limit estimate 3})$, we obtain
\begin{align}
\sum_{j=1}^\infty K_\omega(V^j) \leq 0, \quad \sum_{j=1}^\infty H_\omega(V^j) \leq \frac{2(\alpha+2)}{\alpha} d(\omega). \label{limit estimate 4}
\end{align}
We claim that $K_\omega(V^j) =0$ for all $j \geq 1$. Indeed, suppose that there exists $j_0 \geq 1$ such that $K_\omega(V^{j_0}) <0$. Set 
\[
\lambda_0:= \left(\frac{H_\omega(V^{j_0})}{ \|V^{j_0}\|^{\alpha+2}_{L^{\alpha+2}}} \right)^{\frac{1}{\alpha}}.
\]
Since $K_\omega(V^{j_0}) <0$, we see that $\lambda_0 \in (0,1)$. Moreover, for $\lambda>0$, we have
\[
K_\omega(\lambda V^{j_0}) = \lambda^2 H_\omega(V^{j_0}) - \lambda^{\alpha+2} \|V^{j_0}\|^{\alpha+2}_{L^{\alpha+2}}.
\]
By the choice of $\lambda_0$, we see that $K_\omega(\lambda_0 V^{j_0})=0$. Thus,
\[
d(\omega) \leq S_\omega(\lambda_0 V^{j_0}) = \frac{\alpha}{2(\alpha+2)} H_\omega(\lambda_0 V^{j_0}) =  \frac{\alpha \lambda_0^2}{2(\alpha+2)} H_\omega(V^{j_0}) < \frac{\alpha}{2(\alpha+2)} H_\omega(V^{j_0}). 
\]
Thanks to the second inequality of $(\ref{limit estimate 4})$, we get
\[
d(\omega) <\frac{\alpha}{2(\alpha+2)} H_\omega(V^{j_0}) \leq d(\omega),
\]
which is absurb. We next claim that there exists exactly one $j$ such that $V^j$ is non-zero. Indeed, if there exists $V^{j_1}$ and $V^{j_2}$ non-zero, then the second inequality of $(\ref{limit estimate 4})$ implies that both $H_\omega(V^{j_1})$ and $H_\omega(V^{j_2})$ are strictly smaller than $\frac{2(\alpha+2)}{\alpha} d(\omega)$. However, since $K_\omega(V^{j_1}) =0$, we learn from the definition of $d(\omega)$ that 
\[
\frac{2(\alpha+2)}{\alpha} d(\omega) \leq \frac{2(\alpha+2)}{\alpha} S_\omega(V^{j_1}) = H_\omega(V^{j_1}) <\frac{2(\alpha+2)}{\alpha} d(\omega),
\]
which is again absurd. 
	
Without loss of generality, we may assume that the only non-zero profile is $V^1$. We have from $(\ref{limit estimate 3})$ that
\[
\|V^1\|^{\alpha+2}_{L^{\alpha+2}} = \frac{2(\alpha+2)}{\alpha} d(\omega),
\]
which implies $V^1 \ne 0$. On the other hand, by the first estimate of $(\ref{limit estimate 4})$, we have $K_\omega(V^1) \leq 0$. By the same argument as above, we get $K_\omega(V^1) =0$. Therefore, $V^1$ is a minimizer of $d(\omega)$, and the minimizing problem $(\ref{minimizing problem})$ is attained.
	
\noindent {\bf Step 2.} We will show that $V^1$ is a ground state related to $(\ref{elliptic equation omega})$. Since $V^1$ is a minimizer of $d(\omega)$, there exists a Lagrange multiplier $\mu\in \R$ such that $S'_\omega(V^1) = \mu K'_\omega(V^1)$. We thus have
\[
0 = K_\omega(V^1) = \scal{S'_\omega(V^1), V^1} = \mu \scal{K'_\omega(V^1), V^1}.
\]
It is easy to see that
\[
K'_\omega(V^1) = 2(-\Delta)^s V^1 + 2 \omega V^1 - (\alpha+2) \|V^1|^\alpha V^1.
\]
Hence,
\[
\scal{K'_\omega(V^1), V^1} = 2 H_\omega(V^1) - (\alpha+2) \|V^1\|^{\alpha+2}_{L^{\alpha+2}} = -\alpha \|V^1\|^{\alpha+2}_{L^{\alpha+2}} <0.
\]
It follows that $\mu=0$ and $S'_\omega(V^1) =0$. In particular, $V^1$ is a solution to $(\ref{elliptic equation omega})$. Note that since $S_\omega(|V^1|) = S_\omega(V^1)$ and $K_\omega(|V^1|) = K_\omega(V^1)$, we can choose $V^1$ to be non-negative. 
	
We will show that $V^1$ is a minimizer of the Weinstein'functional $(\ref{weinstein functional})$. Let $v \in H^s \backslash \{0\}$. It is easy to see that $K_\omega(\lambda_0 v)=0$, where
\begin{align}
\lambda_0 := \left( \frac{H_\omega(v)}{\|v\|^{\alpha+2}_{L^{\alpha+2}}} \right)^{\frac{1}{\alpha}} >0. \label{define lambda_0}
\end{align}
By the definition of $d(\omega)$, we have 
\begin{align}
S_\omega(V^1) \leq S_\omega(\lambda_0 v). \label{property V1}
\end{align}  
On the other hand, we have
\[
S_\omega(\lambda v) = \frac{\lambda^2}{2} H_\omega(v) - \frac{\lambda^{\alpha+2}}{\alpha+2} \|v\|^{\alpha+2}_{L^{\alpha+2}},
\]
and
\[
S'_\omega(\lambda v) = \lambda H_\omega(v) - \lambda^{\alpha+1} \|v\|^{\alpha+2}_{L^{\alpha+2}}.
\]
Hence $S'_\omega(\lambda_0 v) =0$, or $\lambda_0 v$ is a solution to $(\ref{elliptic equation omega})$. It follows that both $V^1$ and $\lambda_0 v$ satisfy the following Pohozaev's identities (see e.g. \cite[Appendix]{BoulengerHimmelsbachLenzmann}):
\begin{align*}
\omega \|V^1\|^2_{L^2} &= \frac{4s-(d-2s)\alpha}{2s(\alpha+2)} \|V^1\|^{\alpha+2}_{L^{\alpha+2}} = \frac{4s- (d-2s)\alpha}{d\alpha} \|V^1\|^2_{\dot{H}^s}, \\
\omega \|\lambda_0 v\|^2_{L^2} &= \frac{4s-(d-2s)\alpha}{2s(\alpha+2)} \|\lambda_0 v\|^{\alpha+2}_{L^{\alpha+2}} = \frac{4s- (d-2s)\alpha}{d\alpha} \|\lambda_0 v\|^2_{\dot{H}^s}.
\end{align*}
On one hand, by $(\ref{property V1})$ and the fact $K_\omega(V^1) = K_\omega(\lambda_0 v) =0$, we get
\[
\|V^1\|^{\alpha+2}_{L^{\alpha+2}} \leq \|\lambda_0 v\|^{\alpha+2}_{L^{\alpha+2}}.
\]
On the other hand, using Pohozaev's identities, we have
\begin{align*}
J(v) = J(\lambda_0 v) &= \left[ \|\lambda_0 v\|^{\frac{d\alpha}{2s}}_{\dot{H}^s} \|\lambda_0 v\|^{\alpha+2-\frac{d\alpha}{2s}}_{L^2} \right] \div \|\lambda_0 v\|^{\alpha+2}_{L^{\alpha+2}} \\
&= \left( \frac{d\alpha}{2s(\alpha+2)} \right)^{\frac{d\alpha}{4s}} \left(\frac{4s-(d-2s)\alpha}{2s(\alpha+2)\omega} \right)^{\frac{\alpha+2}{2} - \frac{d\alpha}{4s}} \left(\|\lambda_0 v\|^{\alpha+2}_{L^{\alpha+2}}\right)^{\frac{\alpha}{2}} \\
&\geq \left( \frac{d\alpha}{2s(\alpha+2)} \right)^{\frac{d\alpha}{4s}} \left(\frac{4s-(d-2s)\alpha}{2s(\alpha+2)\omega} \right)^{\frac{\alpha+2}{2} - \frac{d\alpha}{4s}} \left(\|V^1\|^{\alpha+2}_{L^{\alpha+2}}\right)^{\frac{\alpha}{2}} \\
&= J(V^1).
\end{align*}
This implies that $J(V^1) \leq J(v)$ for any $v \in H^s\backslash \{0\}$, or $V^1$ is a minimizer of the Weinstein functional $(\ref{weinstein functional})$. Therefore, $V^1$ is a ground state related to $(\ref{elliptic equation omega})$. 
	
\noindent {\bf Step 3.} Conclusion. By the uniqueness (up to symmetries) of ground states related to $(\ref{elliptic equation omega})$, we obtain $V^1=\phi_\omega$ (up to symmetries), hence $S_\omega(V^1) = S_\omega(\phi_\omega)$. This proves $(\ref{property S_omega phi_omega})$ and the proof is complete.
\defendproof

\section{Strong instability of standing waves} \label{section strong instability}
\setcounter{equation}{0}
The main purpose of this section is to give the proof of Theorem $\ref{theorem strong instability}$. Let us start with the local well-posedness of $(\ref{FNLS})$.

The local well-posedness for $(\ref{FNLS})$ in the energy space $H^s$ was first studied by Hong-Sire in \cite{HongSire} (see also \cite{Dinh-fract}). The proof is based on Strichartz estimates and the contraction mapping argument. Note that for non-radial data, Strichartz estimates have a loss of derivatives. Fortunately, this loss of derivatives can be conpensated for by using Sobolev embedding. However, it leads to a weak local well-posedness in the energy space compared to the well-known nonlinear Schr\"odinger equation ($s=1$). We refer the reader to \cite{HongSire, Dinh-fract} for more details. One can remove the loss of derivatives in Strichartz estimates by considering radially symmetric data. However, it needs a restriction on the validity of $s$, namely $\frac{d}{2d-1} \leq s <1$. More precisely, we have the following local well-posedness for $(\ref{FNLS})$ with radial $H^s$ initial data. 
\begin{proposition} [Local well-posedness \cite{Dinh-mfnls}] \label{proposition local well-posedness}
	Let $d\geq 2$, $\frac{d}{2d-1} \leq s <1$ and $0 <\alpha<\frac{4s}{d-2s}$. Let 
	\[
	p= \frac{4s(\alpha+2)}{\alpha(d-2s)}, \quad q= \frac{d(\alpha+2)}{d+\alpha s}.
	\]
	Then for any $u_0 \in H^s$ radial, there exist $T \in (0,+\infty]$ and a unique solution to $(\ref{FNLS})$ satisfying
	\[
	u \in C([0,T), H^s) \cap L^p_{\emph{loc}} ([0,T), W^{s,q}).
	\]
	Moreover, the following properties hold:
	\begin{itemize}
		\item If $T<+\infty$, then $\|u(t)\|_{\dot{H}^s} \rightarrow +\infty$ as $t \uparrow T$;
		\item $u \in L^a_{\emph{loc}}([0,T), W^{s,b})$ for any $(a,b)$ fractional admissible pair, i.e. 
		\[
		a \in [2,\infty], \quad b \in [2,\infty), \quad (a,b) \ne \left( 2, \frac{4d-2}{2d-3} \right), \quad \frac{2s}{a} + \frac{d}{b} = \frac{d}{2}. 
		\]
		\item There is conservation of mass and energy, 
		\begin{align*}
		\emph{(Mass)} \quad M(u(t)) &= \int |u(t,x)|^2 dx = M(u_0), \\
		\emph{(Energy)} \quad E(u(t)) &= \frac{1}{2} \int |(-\Delta)^{s/2} u(t,x)|^2 dx - \frac{1}{\alpha+2} \int |u(t,x)|^{\alpha+2}dx=E(u_0),
		\end{align*}
		for all $t\in [0,T)$. 
	\end{itemize}
\end{proposition}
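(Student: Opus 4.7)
The plan is a standard contraction argument on the Duhamel formulation
\[
u(t) = e^{-it(-\Delta)^s} u_0 - i\int_0^t e^{-i(t-\tau)(-\Delta)^s}\bigl(|u|^\alpha u\bigr)(\tau)\,d\tau,
\]
combined with fractional Strichartz estimates for radial data. The dispersive estimate for $e^{-it(-\Delta)^s}$ incurs a derivative loss on general data, but for radial functions in the range $\frac{d}{2d-1}\leq s<1$ one recovers clean bounds
\[
\|e^{-it(-\Delta)^s}f\|_{L^a_t L^b_x}\lesssim \|f\|_{L^2},\qquad
\Bigl\|\int_0^t e^{-i(t-\tau)(-\Delta)^s}F(\tau)\,d\tau\Bigr\|_{L^a_t L^b_x}\lesssim \|F\|_{L^{\tilde a'}_t L^{\tilde b'}_x}
\]
for every fractional admissible pair in the sense of the statement; the precise endpoint restriction excluding $(2,\tfrac{4d-2}{2d-3})$ reflects the known range for these radial estimates. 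Since $|u|^\alpha u$ is a pointwise nonlinearity, radial symmetry is preserved by the iteration, so the fixed point can be carried out within the radial class.

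I would work in
\[
X_T := \bigl\{ u\in C([0,T];H^s)\cap L^p([0,T];W^{s,q}) : u(t,\cdot)\text{ radial},\ \|u\|_{X_T}\leq M \bigr\},
\]
with $\|u\|_{X_T} := \|u\|_{L^\infty_t H^s_x} + \|u\|_{L^p_t W^{s,q}_x}$, and show that the Duhamel map $\Phi$ is a self-map and contraction on $X_T$ in the weaker $L^p_t L^q_x$ metric, provided $T$ is small and $M\sim \|u_0\|_{H^s}$. A direct check confirms that $(p,q)$ as defined is fractional admissible, since $\frac{2s}{p}+\frac{d}{q}=\frac{d}{2}$. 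Applying the inhomogeneous estimate with $(p,q)$ on the left and $(p',q')$ on the right, and similarly with $(\infty,2)$ on the left to control $L^\infty_t H^s_x$, the whole argument reduces to the nonlinear bound
\[
\bigl\||u|^\alpha u\bigr\|_{L^{p'}([0,T];W^{s,q'})} \lesssim T^\theta \|u\|^{\alpha+1}_{L^p([0,T];W^{s,q})},
\]
for some $\theta>0$. This in turn follows from a fractional Leibniz rule for $(-\Delta)^{s/2}$, the Christ-Weinstein-type fractional chain rule applied to the map $u\mapsto |u|^\alpha u$ (valid for $\alpha>0$), Hölder in space with Sobolev embedding $W^{s,q}\hookrightarrow L^{q_1}$ for an appropriate $q_1$, and Hölder in time; the factor $T^\theta$ with $\theta>0$ is exactly what the energy-subcritical gap $\alpha<\frac{4s}{d-2s}$ buys, and is what allows contraction for small $T$.

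The remaining assertions are standard consequences. For the blow-up alternative, one argues by contradiction: if $T^\ast<\infty$ were maximal while $\liminf_{t\uparrow T^\ast}\|u(t)\|_{\dot H^s}<\infty$, then using the mass conservation and a bounded subsequence in $H^s$ one could reapply the local theory from a time sufficiently close to $T^\ast$ and extend past $T^\ast$, contradicting maximality. Membership of $u$ in every $L^a_{\text{loc}}([0,T^\ast);W^{s,b})$ with $(a,b)$ fractional admissible is then obtained by plugging the already constructed solution into the inhomogeneous Strichartz estimate with $(a,b)$ on the left, the right-hand side being bounded by the $L^p_t W^{s,q}_x$ norm already controlled. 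Conservation of mass and energy is proven by first regularising $u_0$ to $u_0^n\in H^{2s}$, using persistence of regularity to produce $C^1$-in-time classical solutions $u^n$ for which $\partial_t M(u^n)=\partial_t E(u^n)=0$ follows by direct integration by parts (using self-adjointness of $(-\Delta)^s$), and passing to the limit via continuous dependence in $H^s$. The main obstacle of the whole plan is the nonlinear estimate at the scaling-critical pair $(p,q)$, which is precisely where the radial restriction $\frac{d}{2d-1}\leq s<1$ and the subcritical hypothesis $\alpha<\frac{4s}{d-2s}$ are both used in an essential way; once these are in place, the rest is bookkeeping.
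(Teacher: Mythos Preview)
Your outline is correct and is precisely the standard contraction--mapping argument via radial Strichartz estimates that underlies this result; the paper itself does not reproduce the proof but simply refers the reader to \cite[Proposition~2.5]{Dinh-mfnls}, whose argument is the one you describe. Your verification that $(p,q)$ is fractional admissible and that the energy--subcritical gap $\alpha<\tfrac{4s}{d-2s}$ yields the positive power $T^\theta$ is exactly the mechanism used there.
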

We refer the reader to \cite[Proposition 2.5]{Dinh-mfnls} for the proof of this result.

Now, let us denote
\begin{align}
I(v):= s \|u(t)\|^2_{\dot{H}^s} - \frac{d\alpha}{2(\alpha+2)} \|u(t)\|^{\alpha+2}_{L^{\alpha+2}}. \label{define I}
\end{align}
Note that if we take
\begin{align}
v^\lambda(x):= \lambda^{\frac{d}{2}} v(\lambda x), \label{scaling}
\end{align}
then a simple computation shows
\[
\|v^\lambda\|_{L^2}=\|v\|_{L^2}, \quad \|v^\lambda\|_{\dot{H}^s} = \lambda^s \|v\|_{\dot{H}^s}, \quad \|v^\lambda\|_{L^{\alpha+2}} = \lambda^{\frac{d\alpha}{2(\alpha+2)}} \|v\|_{L^{\alpha+2}}.
\]
We also have
\begin{align*}
S_\omega(v^\lambda)&= \frac{1}{2} \|v^\lambda\|^2_{\dot{H}^s} +\frac{\omega}{2} \|v^\lambda\|^2_{L^2} - \frac{1}{\alpha+2} \|v^\lambda\|^{\alpha+2}_{L^{\alpha+2}} \\
&= \frac{\lambda^{2s}}{2} \|v\|^2_{\dot{H}^s} +\frac{\omega}{2}\|v\|^2_{L^2} - \frac{\lambda^{\frac{d\alpha}{2}}}{\alpha+2} \|v\|^{\alpha+2}_{L^{\alpha+2}}.
\end{align*}
It is easy to see that
\[
I(v) = \left. \partial_\lambda S_\omega(v^\lambda) \right|_{\lambda=1}.
\]
We have the following characterization of the ground state in the mass-supercritical and energy-subcritical case.
\begin{lemma} \label{lemma characterization supercritical}
Let $d\geq 1$, $0<s<1$, $\frac{4s}{d} <\alpha<\alpha^\star$ with $\alpha^\star$ as in $(\ref{define alpha star})$ and $\omega>0$. Let $\phi_\omega$ be the ground state related to $(\ref{elliptic equation omega})$. Then 
\begin{align}
S_\omega(\phi_\omega) =\inf \left\{ S_\omega(v) \ : \ v \in H^s \backslash \{0\}, \ I(v) =0 \right\}. \label{characterization supercritical}
\end{align}
\end{lemma}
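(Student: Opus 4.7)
The plan is to establish the two inequalities defining the characterization $(\ref{characterization supercritical})$, leveraging Proposition $\ref{proposition characterization ground state}$ already proved for the Nehari-type constraint $K_\omega(v)=0$.

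First I would verify that $\phi_\omega$ itself is admissible for the right-hand side of $(\ref{characterization supercritical})$. From the Pohozaev identities $(\ref{pohozaev identities elliptic equation omega})$ one reads off
\[
s\|\phi_\omega\|^2_{\dot{H}^s} = \frac{d\alpha}{2(\alpha+2)}\|\phi_\omega\|^{\alpha+2}_{L^{\alpha+2}},
\]
so $I(\phi_\omega)=0$ and hence $S_\omega(\phi_\omega) \geq \inf\{S_\omega(v) : v \in H^s\setminus\{0\},\ I(v)=0\}$.

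For the reverse inequality, let $v \in H^s \setminus \{0\}$ satisfy $I(v)=0$, and consider the $L^2$-preserving scaling $v^\lambda(x)=\lambda^{d/2}v(\lambda x)$ from $(\ref{scaling})$. A direct computation gives
\[
S_\omega(v^\lambda) = \frac{\lambda^{2s}}{2}\|v\|^2_{\dot{H}^s} + \frac{\omega}{2}\|v\|^2_{L^2} - \frac{\lambda^{d\alpha/2}}{\alpha+2}\|v\|^{\alpha+2}_{L^{\alpha+2}}, \qquad \lambda\,\partial_\lambda S_\omega(v^\lambda) = I(v^\lambda).
\]
In the mass-supercritical regime $\alpha>\tfrac{4s}{d}$ we have $d\alpha/2>2s$, so $\partial_\lambda S_\omega(v^\lambda)$ has a unique zero on $(0,\infty)$; since $I(v)=0$, this critical point is $\lambda=1$, and the sign analysis of $\partial_\lambda S_\omega(v^\lambda)$ (positive for $\lambda<1$, negative for $\lambda>1$) shows that $\lambda=1$ is in fact the unique maximum, i.e.
\[
S_\omega(v) = \max_{\lambda>0} S_\omega(v^\lambda).
\]

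The key step is now to find $\lambda_0>0$ with $K_\omega(v^{\lambda_0})=0$. Since
\[
K_\omega(v^\lambda) = \lambda^{2s}\|v\|^2_{\dot{H}^s} + \omega\|v\|^2_{L^2} - \lambda^{d\alpha/2}\|v\|^{\alpha+2}_{L^{\alpha+2}},
\]
we have $K_\omega(v^\lambda) \to \omega\|v\|^2_{L^2}>0$ as $\lambda\to 0^+$ and $K_\omega(v^\lambda)\to -\infty$ as $\lambda\to\infty$ (again using $d\alpha/2>2s$), so by continuity such a $\lambda_0$ exists. Applying Proposition $\ref{proposition characterization ground state}$ to $v^{\lambda_0}$ and using the maximum property above,
\[
S_\omega(\phi_\omega) \leq S_\omega(v^{\lambda_0}) \leq \max_{\lambda>0} S_\omega(v^\lambda) = S_\omega(v).
\]
Taking the infimum over admissible $v$ completes the proof.

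I expect the only real obstacle to be making the scaling picture rigorous in the fractional setting, in particular verifying carefully the monotonicity of $\lambda\mapsto S_\omega(v^\lambda)$ around its unique critical point and, correspondingly, confirming the sign change of $K_\omega(v^\lambda)$; the rest is bookkeeping of the Pohozaev relations and an appeal to the previously established characterization.
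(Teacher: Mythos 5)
Your proposal is correct and follows essentially the same route as the paper: verify $I(\phi_\omega)=0$ via the Pohozaev identities for one inequality, and for the other use the $L^2$-preserving scaling $v^\lambda$, the sign analysis of $\partial_\lambda S_\omega(v^\lambda)=I(v^\lambda)/\lambda$ showing $\lambda=1$ maximizes $S_\omega(v^\lambda)$ when $I(v)=0$, the intermediate value argument producing $\lambda_0$ with $K_\omega(v^{\lambda_0})=0$, and Proposition \ref{proposition characterization ground state} to conclude $S_\omega(\phi_\omega)\leq S_\omega(v^{\lambda_0})\leq S_\omega(v)$. The only cosmetic difference is that the paper treats the case $K_\omega(v)=0$ separately before the scaling argument, which your unified treatment subsumes.
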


\begin{proof}
Denote $m:= \inf \left\{ S_\omega(v) \ : \ v \in H^s \backslash \{0\}, \ I(v) =0 \right\}$. By Pohozaev's identities, it is easy to check that $I(\phi_\omega) = K_\omega(\phi_\omega) =0$. By definition of $m$, we have
\begin{align}
S_\omega(\phi_\omega) \geq m. \label{characterization proof 1}
\end{align}
Let $v \in H^s \backslash \{0\}$ be such that $I(v) =0$. If $K_\omega(v) =0$, then $S_\omega(v) \geq S_\omega(\phi_\omega)$. Assume $K_\omega(v) \ne 0$. We have 
\[
K_\omega (v^\lambda) = \lambda^{2s} \|v\|^2_{\dot{H}^s} + \omega \|v\|^2_{L^2} - \lambda^{\frac{d\alpha}{2}} \|v\|^{\alpha+2}_{L^{\alpha+2}},
\]
where $v^\lambda$ is as in $(\ref{scaling})$. Since $\frac{d\alpha}{2} >2s$, we see that $\lim_{\lambda \rightarrow 0} K_\omega(v^\lambda)= \omega \|v\|^2_{L^2} >0$ and $\lim_{\lambda \rightarrow +\infty} K_\omega(v^\lambda) = -\infty$. It follows that there exists $\lambda_0>0$ such that $K_\omega(v^{\lambda_0}) =0$. By $(\ref{property S_omega phi_omega})$, we get $S_\omega(v^{\lambda_0}) \geq S_\omega(\phi_\omega)$. On the other hand,
\[
\partial_\lambda S_\omega(v^\lambda) = s \lambda^{2s-1} \|v\|^2_{\dot{H}^s} - \frac{d\alpha}{2(\alpha+2)} \lambda^{\frac{d\alpha}{2}-1} \|v\|^{\alpha+2}_{L^{\alpha+2}} = \frac{I(v^\lambda)}{\lambda}.
\]
It is easy to see that the equation $\partial_\lambda S_\omega(v^\lambda)=0$ admits a unique non-zero solution
\[
\left( \frac{\|v\|^2_{\dot{H}^s}}{\frac{d\alpha}{2s(\alpha+2)} \|v\|^{\alpha+2}_{L^{\alpha+2}} } \right)^{\frac{2}{d\alpha-4s}} =1.
\]
The last equality comes from the fact that $I(v) =0$. This implies that 
\[
\left\{
\begin{array}{cl}
\partial_\lambda S_\omega(v^\lambda) >0 & \text{if } \lambda \in (0,1), \\
\partial_\lambda S_\omega(v^\lambda) <0 & \text{if } \lambda \in (1, \infty).
\end{array}
\right.
\]
In particular, we have $S_\omega(v^\lambda) < S_\omega(v)$ for any $\lambda>0, \lambda \ne 1$. Since $\lambda_0>0$, we have $S_\omega(v^{\lambda_0}) \leq S_\omega(v)$. Thus, $S_\omega(\phi_\omega) \leq S_\omega(v)$ for any $v\in H^s \backslash \{0\}$ satisfying $I(v)=0$. It follows that
\begin{align}
S_\omega(\phi_\omega) \leq m. \label{characterization proof 2}
\end{align}
Combining $(\ref{characterization proof 1})$ and $(\ref{characterization proof 2})$, the proof is complete.
\end{proof}

Let $\phi_\omega$ be the ground state related to $(\ref{elliptic equation omega})$. We define 
\[
\mathcal{C}_\omega := \left\{ v \in H^s \backslash \{0\} \ : \ S_\omega(v) < S_\omega(\phi_\omega), \ I(v) <0 \right\}.
\]

\begin{lemma} \label{lemma invariant C_omega}
Let $d\geq 1$, $0<s<1$, $\frac{4s}{d}<\alpha <\alpha^\star$ with $\alpha^\star$ as in $(\ref{define alpha star})$, $\omega>0$ and $\phi_\omega$ be the ground state related to $(\ref{elliptic equation omega})$. Then the set $\mathcal{C}_\omega$ is invariant under the flow of $(\ref{FNLS})$, that is, if $u_0 \in \mathcal{C}_\omega$, then the solution $u(t)$ to $(\ref{FNLS})$ with initial data $u_0$ belongs to $\mathcal{C}_\omega$ for any $t$ in the existence time.
\end{lemma}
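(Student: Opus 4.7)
The plan is to combine conservation of the action $S_\omega$ along the flow with a continuity argument based on the variational characterization of $\phi_\omega$ provided by Lemma \ref{lemma characterization supercritical}. First observe that $S_\omega(v)=E(v)+\tfrac{\omega}{2}\|v\|_{L^2}^{2}$, so the conservation of mass and energy established in Proposition \ref{proposition local well-posedness} immediately yields $S_\omega(u(t))=S_\omega(u_0)$ for every $t$ in the maximal interval of existence.

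Let $u_0\in\mathcal{C}_\omega$ and let $u\in C([0,T),H^s)$ denote the corresponding maximal solution. Conservation of $S_\omega$ gives $S_\omega(u(t))<S_\omega(\phi_\omega)$ for all $t\in[0,T)$. Moreover, since $I(u_0)<0$ forces $u_0\not\equiv 0$, conservation of mass gives $\|u(t)\|_{L^2}=\|u_0\|_{L^2}>0$, so $u(t)\in H^s\setminus\{0\}$ throughout $[0,T)$. It therefore remains to show that $I(u(t))<0$ for every $t\in[0,T)$.

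Suppose this fails and set $t_0:=\inf\{t\in[0,T)\,:\,I(u(t))\ge 0\}$. The functional $I$ is continuous on $H^s$: the $\dot H^s$-part is obviously continuous, while the $L^{\alpha+2}$-part is continuous by the Sobolev embedding $H^s\hookrightarrow L^{\alpha+2}$, which is available since $\alpha+2<2+\alpha^\star$ in the energy-subcritical regime. Combined with $u\in C([0,T),H^s)$, this makes $t\mapsto I(u(t))$ continuous on $[0,T)$. Since $I(u_0)<0$, we infer $t_0>0$ and $I(u(t_0))=0$. Applying Lemma \ref{lemma characterization supercritical} to $u(t_0)\in H^s\setminus\{0\}$ then yields $S_\omega(u(t_0))\ge S_\omega(\phi_\omega)$, contradicting $S_\omega(u(t_0))=S_\omega(u_0)<S_\omega(\phi_\omega)$. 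Hence no such $t_0$ exists, and $u(t)\in\mathcal{C}_\omega$ for every $t\in[0,T)$. The argument is essentially routine; the only substantive input is the variational identity of Lemma \ref{lemma characterization supercritical}, established just above, together with the standard continuity/conservation facts from Proposition \ref{proposition local well-posedness}.
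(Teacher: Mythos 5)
Your proposal is correct and follows essentially the same route as the paper: conservation of mass and energy gives $S_\omega(u(t))=S_\omega(u_0)<S_\omega(\phi_\omega)$, and a continuity argument on $t\mapsto I(u(t))$ produces a time where $I$ vanishes, contradicting the variational characterization $(\ref{characterization supercritical})$. The extra details you supply (continuity of $I$ via Sobolev embedding, non-vanishing of $u(t)$ by mass conservation) are left implicit in the paper but are exactly the right justifications.
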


\begin{proof}
Let $u_0 \in \mathcal{C}_\omega$. By the conservation of mass and energy, we have
\begin{align}
S_\omega(u(t)) = S_\omega(u_0) <S_\omega(\phi_\omega), \label{estimate S_omega}
\end{align}
for any $t$ in the existence time. It remains to show $I(u(t))<0$ for any $t$ in the existence time. Suppose that there exists $t_0$ such that $I(u(t_0)) \geq 0$. By the continuity of the function $t\mapsto I(u(t))$, there exists $t_1 \in (0,t_0]$ so that $I(u(t_1))=0$. By $(\ref{characterization supercritical})$, we have $S_\omega(u(t_1)) \geq S_\omega(\phi_\omega)$ which contradicts with $(\ref{estimate S_omega})$. The proof is complete. 
\end{proof}

\begin{lemma} \label{lemma key estimate}
Let $d\geq 1$, $0<s<1$, $\frac{4s}{d}<\alpha<\alpha^\star$ with $\alpha^\star$ as in $(\ref{define alpha star})$, $\omega>0$ and $\phi_\omega$ be the ground state related to $(\ref{elliptic equation omega})$. If $v \in \mathcal{C}_\omega$, then
\begin{align}
I(v) \leq 2s(S_\omega(v) - S_\omega(\phi_\omega)). \label{key estimate}
\end{align}
\end{lemma}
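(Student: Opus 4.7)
The plan is to exploit the $L^2$-preserving scaling $v^\lambda(x)=\lambda^{d/2}v(\lambda x)$ together with the characterization of $\phi_\omega$ as a minimizer of $S_\omega$ over $\{I=0\}$ supplied by Lemma \ref{lemma characterization supercritical}. The idea is to rescale $v \in \mathcal{C}_\omega$ down to a point $\lambda_0 \in (0,1)$ where $I(v^{\lambda_0})=0$, use the variational characterization to bound $S_\omega(v^{\lambda_0})$ from below by $S_\omega(\phi_\omega)$, and then exploit a monotonicity of a well-chosen linear combination of $S_\omega(v^\lambda)$ and $I(v^\lambda)$ to pass from the information at $\lambda_0$ back to $\lambda=1$.

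First I would record the explicit formulas
\[
S_\omega(v^\lambda)=\tfrac{\lambda^{2s}}{2}\|v\|_{\dot H^s}^2+\tfrac{\omega}{2}\|v\|_{L^2}^2-\tfrac{\lambda^{d\alpha/2}}{\alpha+2}\|v\|_{L^{\alpha+2}}^{\alpha+2},\qquad I(v^\lambda)=s\lambda^{2s}\|v\|_{\dot H^s}^2-\tfrac{d\alpha}{2(\alpha+2)}\lambda^{d\alpha/2}\|v\|_{L^{\alpha+2}}^{\alpha+2},
\]
already computed in the paper (note $I(v^\lambda)=\lambda\,\partial_\lambda S_\omega(v^\lambda)$). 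Since $\frac{d\alpha}{2}>2s$ in the mass-supercritical regime, the factorization $I(v^\lambda)=\lambda^{2s}\bigl(s\|v\|_{\dot H^s}^2-\tfrac{d\alpha}{2(\alpha+2)}\lambda^{d\alpha/2-2s}\|v\|_{L^{\alpha+2}}^{\alpha+2}\bigr)$ gives $I(v^\lambda)>0$ for $\lambda>0$ small, while $I(v^1)=I(v)<0$ by assumption. By continuity there is $\lambda_0\in(0,1)$ with $I(v^{\lambda_0})=0$, and since $v^{\lambda_0}\ne 0$, Lemma \ref{lemma characterization supercritical} yields
\[
S_\omega(v^{\lambda_0})\geq S_\omega(\phi_\omega).
\]

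The core step is to introduce $g(\lambda):=S_\omega(v^\lambda)-\tfrac{1}{2s}I(v^\lambda)$ and observe that the $\|v\|_{\dot H^s}^2$ terms cancel exactly, leaving
\[
g(\lambda)=\tfrac{\omega}{2}\|v\|_{L^2}^2+\tfrac{d\alpha-4s}{4s(\alpha+2)}\,\lambda^{d\alpha/2}\,\|v\|_{L^{\alpha+2}}^{\alpha+2}.
\]
Because $\alpha>\frac{4s}{d}$, the coefficient $d\alpha-4s$ is positive, so $g$ is strictly increasing in $\lambda\geq 0$. Applying this at $\lambda_0<1$ gives $g(\lambda_0)\leq g(1)$, i.e.\ $S_\omega(v^{\lambda_0})-\tfrac{1}{2s}I(v^{\lambda_0})\leq S_\omega(v)-\tfrac{1}{2s}I(v)$. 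Using $I(v^{\lambda_0})=0$ and $S_\omega(v^{\lambda_0})\geq S_\omega(\phi_\omega)$, I rearrange to obtain the desired inequality $I(v)\leq 2s(S_\omega(v)-S_\omega(\phi_\omega))$.

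The only nontrivial point is the choice of the combination $S_\omega-\tfrac{1}{2s}I$: its monotonicity is what converts the lower bound at $\lambda_0$ into one at $\lambda=1$. The existence of $\lambda_0$ and the use of Lemma \ref{lemma characterization supercritical} are routine once the scaling has been set up, so the main obstacle is really just identifying this magic linear combination, which is dictated by requiring the $\dot H^s$-term to cancel so that the remaining $L^{\alpha+2}$-term carries the ``good sign'' $d\alpha-4s>0$ coming from mass-supercriticality.
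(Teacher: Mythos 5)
Your proof is correct and follows essentially the same route as the paper: your monotone quantity $g(\lambda)=S_\omega(v^\lambda)-\tfrac{1}{2s}I(v^\lambda)$ is exactly the integrated form of the paper's differential inequality $(\lambda f'(\lambda))'\leq 2s f'(\lambda)$ for $f(\lambda)=S_\omega(v^\lambda)$, with the favorable sign again coming from $d\alpha-4s>0$. The remaining ingredients --- the existence of $\lambda_0\in(0,1)$ with $I(v^{\lambda_0})=0$ and the lower bound $S_\omega(v^{\lambda_0})\geq S_\omega(\phi_\omega)$ from Lemma \ref{lemma characterization supercritical} --- coincide with the paper's steps.
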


\begin{proof}
Denote
\[
f(\lambda) := S_\omega(v^\lambda) = \frac{\lambda^{2s}}{2} \|v\|^2_{\dot{H}^s} +\frac{\omega}{2} \|v\|^2_{L^2} - \frac{\lambda^{\frac{d\alpha}{2}}}{\alpha+2} \|v\|^{\alpha+2}_{L^{\alpha+2}}.
\]
We have
\begin{align}
f'(\lambda)= s \lambda^{2s-1} \|v\|^2_{\dot{H}^s} -\frac{d\alpha}{2(\alpha+2)} \lambda^{\frac{d\alpha}{2}-1} \|v\|^{\alpha+2}_{L^{\alpha+2}} = \frac{I(v^\lambda)}{\lambda}. \label{property f 1}
\end{align}
We also have
\begin{align*}
(\lambda f'(\lambda))' &= 2s^2 \lambda^{2s-1} \|v\|^2_{\dot{H}^s} - \frac{d^2\alpha^2}{4(\alpha+2)} \lambda^{\frac{d\alpha}{2}-1} \|v\|^{\alpha+2}_{L^{\alpha+2}}  \\
&= 2s \left(s \lambda^{2s-1} \|v\|^2_{\dot{H}^s} - \frac{d\alpha}{2(\alpha+2)} \lambda^{\frac{d\alpha}{2}-1} \|v\|^{\alpha+2}_{L^{\alpha+2}} \right) - \frac{d\alpha(d\alpha-4s)}{4(\alpha+2)} \lambda^{\frac{d\alpha}{2}-1} \|v\|^{\alpha+2}_{L^{\alpha+2}}  \\
&=2s f'(\lambda) - \frac{d\alpha(d\alpha-4s)}{4(\alpha+2)} \lambda^{\frac{d\alpha}{2}-1} \|v\|^{\alpha+2}_{L^{\alpha+2}}.
\end{align*}
Since $d\alpha>4s$, we thus get
\begin{align}
(\lambda f'(\lambda))' \leq 2s f'(\lambda), \label{property f 2}
\end{align}
for all $\lambda>0$. Note that since $I(v)<0$, we see that the equation $\partial_\lambda S_\omega(v^\lambda) =0$ admits a unique non-zero solution
\[
\lambda_0 = \left( \frac{\|v\|^2_{\dot{H}^s}}{\frac{d\alpha}{2s(\alpha+2)} \|v\|^{\alpha+2}_{L^{\alpha+2}} } \right)^{\frac{2}{d\alpha-4s}} \in (0,1).
\]
It follows that $I(v^{\lambda_0}) = \lambda_0 \left. \partial_\lambda S_\omega (v^\lambda) \right|_{\lambda=\lambda_0} =0$. Taking integration $(\ref{property f 2})$ over $(\lambda_0,1)$ and using $(\ref{property f 1})$, we obtain
\[
I(v) - I(v^{\lambda_0}) \leq 2s (S_\omega(v) - S_\omega(v^{\lambda_0})) \leq 2s (S_\omega(v) - S_\omega(\phi_\omega)). 
\]
Here the last inequality follows from $(\ref{characterization supercritical})$ and the fact $I(v^{\lambda_0})=0$. The proof is complete.
\end{proof}

We next recall the localized virial estimate related to $(\ref{FNLS})$ which is the main ingredient in the proof of the strong instability of the ground state standing wave. The localized virial estimate was used by Boulenger-Himmelsbach-Lenzmann \cite{BoulengerHimmelsbachLenzmann} to show the existence of finite time blow-up radial solutions to $(\ref{FNLS})$ in the mass-critical and mass-supercritical cases. Let us start with the following estimate.
\begin{lemma} \label{lemma various estimate 1}
	Let $d\geq 1$ and $\varphi: \R^d \rightarrow \R$ be such that $\nabla \varphi \in W^{1,\infty}$. Then for all $u \in H^{1/2}$, 
	\[
	\left| \int \overline{u}(x) \nabla \varphi(x) \cdot \nabla u(x) dx \right| \leq C\left( \||\nabla|^{1/2} u\|^2_{L^2} + \|u\|_{L^2} \||\nabla|^{1/2} u\|_{L^2} \right),
	\]
	for some $C>0$ depending only on $\|\nabla \varphi\|_{W^{1,\infty}}$ and $d$.
\end{lemma}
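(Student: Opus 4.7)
The plan is to interpret the integral as an $H^{-1/2}$-$H^{1/2}$ duality pairing, split one derivative as two half-derivatives via the Riesz transform, and distribute them symmetrically so that everything reduces to the $L^2$ norm of $|\nabla|^{1/2}u$ together with $L^\infty$-information on $\varphi$.

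First I would use the factorization $\partial_j=|\nabla|^{1/2}\mathcal{R}_j|\nabla|^{1/2}$, where $\mathcal{R}_j$ is the Riesz transform with symbol $i\xi_j/|\xi|$ (bounded on $L^2$). Applying this identity together with the self-adjointness of $|\nabla|^{1/2}$ rewrites each component of the integral as
\[
\int \overline{u}(\partial_j\varphi)(\partial_j u)\,dx = \int |\nabla|^{1/2}\bigl(\overline{u}\,\partial_j\varphi\bigr)\cdot\mathcal{R}_j|\nabla|^{1/2}u\,dx,
\]
which by the Cauchy-Schwarz inequality and the $L^2$-boundedness of $\mathcal{R}_j$ is controlled by $C\,\||\nabla|^{1/2}(u\,\partial_j\varphi)\|_{L^2}\,\||\nabla|^{1/2}u\|_{L^2}$.

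Next I would invoke the fractional Leibniz (Kato-Ponce) rule
\[
\||\nabla|^{1/2}(u\,\partial_j\varphi)\|_{L^2} \leq C\bigl(\|\partial_j\varphi\|_{L^\infty}\||\nabla|^{1/2}u\|_{L^2} + \||\nabla|^{1/2}\partial_j\varphi\|_{L^\infty}\|u\|_{L^2}\bigr).
\]
Since $\nabla\varphi\in W^{1,\infty}$, both $\|\partial_j\varphi\|_{L^\infty}$ and $\||\nabla|^{1/2}\partial_j\varphi\|_{L^\infty}$ are bounded by a multiple of $\|\nabla\varphi\|_{W^{1,\infty}}$; the second bound follows from the singular-integral representation of $|\nabla|^{1/2}$ (split into short- and long-range regimes) together with the Lipschitz regularity of $\partial_j\varphi$. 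Summing over $j=1,\dots,d$ and combining the two inequalities yields the claim.

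The principal obstacle in executing this plan is the careful justification, at the $H^{1/2}$ level of regularity, of (a) the duality interpretation of the left-hand side---which itself requires the Kato-Ponce estimate in order to see that $u\,\partial_j\varphi\in H^{1/2}$---and (b) the uniform bound $\||\nabla|^{1/2}\partial_j\varphi\|_{L^\infty}\lesssim\|\nabla\varphi\|_{W^{1,\infty}}$ when $\varphi$ is only of class $W^{2,\infty}$. An alternative Plancherel-based derivation works directly on the Fourier side with kernel $\sum_j\widehat{\partial_j\varphi}(\xi-\eta)\cdot i\eta_j$, using the factorization $|\xi|^2-|\eta|^2=(|\xi|-|\eta|)(|\xi|+|\eta|)$ together with $||\xi|-|\eta||\leq|\xi-\eta|$ and Schur's test; this route avoids Kato-Ponce entirely at the cost of a more delicate bilinear estimate.
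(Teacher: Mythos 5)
The paper offers no proof of this lemma; it is recalled from Boulenger--Himmelsbach--Lenzmann \cite{BoulengerHimmelsbachLenzmann}, and the argument there is essentially the one you propose: factor $\partial_j=|\nabla|^{1/2}\mathcal{R}_j|\nabla|^{1/2}$, move one half-derivative onto $\overline{u}\,\partial_j\varphi$ by self-adjointness, apply Cauchy--Schwarz with the $L^2$-boundedness of the Riesz transforms, and close with the fractional Leibniz rule together with the pointwise bound $\||\nabla|^{1/2}\partial_j\varphi\|_{L^\infty}\lesssim\|\nabla\varphi\|_{W^{1,\infty}}$ obtained from the near/far splitting of the singular-integral kernel. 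Your proposal is correct, and the two points you flag as needing care (the $H^{1/2}$--$H^{-1/2}$ duality interpretation, settled by density, and the $L^\infty$ bound on $|\nabla|^{1/2}\partial_j\varphi$) are precisely the ones that must be checked.
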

Now, let $d\geq 1$, $1/2  \leq s <1$ and $\varphi: \R^d \rightarrow \R$ be such that $\nabla \varphi \in W^{3,\infty}$. Assume $u \in C([0,T), H^s)$ is a solution to $(\ref{FNLS})$. Note that in \cite{BoulengerHimmelsbachLenzmann}, Boulenger-Himmelsbach-Lenzmann assume $u \in C([0,T), H^{2s})$ due to the lack of local theory at that time. Thanks to the local theory (see e.g. \cite{Dinh-fract, Dinh-mfnls, HongSire}), one can recover $H^s$-valued solutions by an approximation argument (see \cite[Section 2]{BoulengerHimmelsbachLenzmann}). The localized virial action of $u$ is defined by
\[
M_\varphi(u(t)):= 2 \int \nabla \varphi(x) \cdot \im{(\overline{u}(t,x) \nabla u(t,x))} dx.
\]
We see that $M_\varphi(u(t))$ is well-defined. Indeed, by Lemma $\ref{lemma various estimate 1}$, 
\[
|M_\varphi(u(t))| \lesssim C(\varphi) \|u(t)\|^2_{H^{1/2}} \lesssim C(\varphi) \|u(t)\|^2_{H^s} <\infty.
\]
In order to study the time evolution of $M_\varphi(u(t))$, we need to introduce the following auxiliary function
\begin{align}
u_m(t,x):= c_s \frac{1}{-\Delta +m} u(t,x) = c_s \mathcal{F}^{-1} \left( \frac{\hat{u}(t,\xi)}{|\xi|^2 +m} \right), \quad m>0, \label{auxiliary function}
\end{align}
where
\[
c_s:= \sqrt{\frac{\sin \pi s}{\pi}}
\]
is the normalization factor. Remark that since $u(t) \in H^s$, the smoothing operator $(-\Delta +m)^{-1}$ implies that $u_m(t) \in H^{s+2}$.

We have the following time evolution of $M_\varphi(u(t))$ (see \cite[Lemma 2.1]{BoulengerHimmelsbachLenzmann}).
\begin{lemma} [Time evolution $M_\varphi(u(t))$ \cite{BoulengerHimmelsbachLenzmann}] \label{lemma time evolution}
Let $d\geq 1, 1/2 < s <1$ and $\varphi: \R^d \rightarrow \R$ be such that $\nabla \varphi \in W^{3,\infty}$. Assume that $u \in C([0,T), H^s)$ is a solution to $(\ref{FNLS})$. Then for any $t\in [0,T)$, it holds that
\begin{align*}
\frac{d}{dt} M_\varphi(u(t)) &= - \int_0^\infty m^s \int \Delta^2 \varphi |u_m(t)|^2 dx dm + 4 \sum_{j,k=1}^d \int_0^\infty m^s \int \partial^2_{jk} \varphi \partial_j \overline{u}_m(t) \partial_k u_m(t) dx dm \nonumber \\
&\mathrel{\phantom{= - \int_0^\infty m^s \int \Delta^2 \varphi |u_m(t)|^2 dx dm }} - \frac{2\alpha}{\alpha+2} \int \Delta \varphi |u(t)|^{\alpha+2} dx,
\end{align*}
where $u_m(t)$ is as in $(\ref{auxiliary function})$.
\end{lemma}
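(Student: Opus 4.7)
The plan is to recognise $M_\varphi(u(t)) = \langle u(t), A u(t)\rangle$ with the self-adjoint first-order operator $A := -i(2\nabla\varphi\cdot\nabla + \Delta\varphi)$. Writing the Schr\"odinger equation as $i\partial_t u = \Lambda u$ with the formally self-adjoint $\Lambda := (-\Delta)^s - |u|^\alpha$, the formal identity
\[
\frac{d}{dt}\langle u, Au\rangle = \langle u, i[\Lambda, A]u\rangle = \langle u, i[(-\Delta)^s, A]u\rangle + \langle u, i[-|u|^\alpha, A]u\rangle
\]
splits the problem into a non-local and a local commutator. The local commutator is standard: one checks $[|u|^\alpha, A] = 2i\,\nabla\varphi\cdot\nabla|u|^\alpha$ (a multiplication operator), so that, using $|u|^2\nabla|u|^\alpha = \tfrac{\alpha}{\alpha+2}\nabla|u|^{\alpha+2}$ and one integration by parts,
\[
\langle u, i[-|u|^\alpha, A]u\rangle = -\frac{2\alpha}{\alpha+2}\int \Delta\varphi\,|u|^{\alpha+2}\,dx,
\]
which reproduces the last term of the claim.

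For the non-local commutator I would invoke the Balakrishnan representation
\[
(-\Delta)^s = \frac{\sin\pi s}{\pi}\int_0^\infty m^{s-1}\frac{-\Delta}{-\Delta+m}\,dm,
\]
write $\tfrac{-\Delta}{-\Delta+m} = 1 - m(-\Delta+m)^{-1}$, and apply the resolvent commutator formula $[(-\Delta+m)^{-1}, A] = (-\Delta+m)^{-1}[A,-\Delta](-\Delta+m)^{-1}$ to obtain
\[
[(-\Delta)^s, A] = c_s^2\int_0^\infty m^s\,(-\Delta+m)^{-1}[-\Delta, A](-\Delta+m)^{-1}\,dm.
\]
Pairing with $u$ on both sides and recognising $u_m = c_s(-\Delta+m)^{-1}u$ (this is precisely the motivation for the normalisation of $c_s$) collapses the double resolvent into
\[
\langle u, i[(-\Delta)^s, A]u\rangle = \int_0^\infty m^s\,\langle u_m, i[-\Delta, A]u_m\rangle\,dm.
\]
A direct purely local computation gives $i[-\Delta, A] = -\Delta^2\varphi - 4\nabla\Delta\varphi\cdot\nabla - 4\sum_{j,k}\partial^2_{jk}\varphi\,\partial^2_{jk}$; integrating by parts in the second-order block produces a $\nabla\Delta\varphi$-contribution that cancels the bare first-order piece exactly, leaving
\[
\langle u_m, i[-\Delta, A]u_m\rangle = -\int \Delta^2\varphi\,|u_m|^2\,dx + 4\sum_{j,k}\int \partial^2_{jk}\varphi\,\partial_j\bar u_m\partial_k u_m\,dx,
\]
which is the remaining content of the statement.

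The main obstacle is justifying the above computation at the regularity $u\in C([0,T),H^s)$ required by the lemma. The smoothing action of $(-\Delta+m)^{-1}$ places each $u_m(t)$ in $H^{s+2}$, so the inner spatial integrals and the integrations by parts are legitimate; however the $m$-integral is improper at both endpoints, and its absolute convergence hinges on the weight $m^s$ being balanced against the resolvent decay, which is what forces the lower bound $s>1/2$ that already appears in Lemma~\ref{lemma various estimate 1}. I would therefore first carry out the full identity for Schwartz-class solutions, where Fubini, the exchange of $\partial_t$ with the $m$-integral, and all boundary-term-free integrations by parts are unambiguously justified, and then extend to arbitrary $H^s$ data via the standard approximation argument mentioned just above the statement, using continuity of the flow in $H^s$ together with the uniform $H^{1/2}$-control of $M_\varphi$ from Lemma~\ref{lemma various estimate 1}.
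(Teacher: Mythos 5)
Your argument is correct: the commutator identity $\frac{d}{dt}\langle u,Au\rangle=\langle u,i[\Lambda,A]u\rangle$, the Balakrishnan/resolvent representation of $(-\Delta)^s$ collapsing onto the $u_m$, the cancellation of the $\nabla\Delta\varphi$ terms, and the treatment of the local commutator all check out. The paper itself offers no proof of this lemma --- it simply cites \cite[Lemma 2.1]{BoulengerHimmelsbachLenzmann} --- and your proposal is essentially that reference's proof, including the regularization/approximation step needed to pass from smooth to $H^s$-valued solutions.
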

\begin{remark} \label{remark time evolution}
Using Plancherel's and Fubini's theorems, it follows that
\begin{align}
\begin{aligned}
\int_0^\infty m^s \int |\nabla u_m| dx dm &=\int \left( \frac{\sin \pi s}{\pi} \int_0^\infty \frac{m^s dm}{(|\xi|^2 + m)^2} \right) |\xi|^2 |\hat{u}(\xi)|^2 d\xi \\
&= \int (s|\xi|^{2s-2})|\xi|^2 |\hat{u}(\xi)|^2 d\xi = s \|u\|^2_{\dot{H}^s}.
\end{aligned}
\label{auxiliary identity}
\end{align}
If we make a formal substitution and take the unbounded function $\varphi(x) = |x|^2$, then by Lemma $\ref{lemma time evolution}$ and $(\ref{auxiliary identity})$, we find formally the virial identity
\begin{align}
\begin{aligned}
\frac{d}{dt} M_{|x|^2} (u(t)) &= 8s \|u(t)\|^2_{\dot{H}^s} - \frac{4d\alpha}{\alpha+2} \|u(t)\|^{\alpha+2}_{L^{\alpha+2}} \\
&=4d\alpha E(u(t))-2(d\alpha-4s) \|u(t)\|^2_{\dot{H}^s} \\
&= 8 I(u(t)),
\end{aligned}
\label{virial identity}
\end{align}
where $I$ is given in $(\ref{define I})$.
\end{remark}

We now recall localized virial estimates for radial $H^s$ solutions related to $(\ref{FNLS})$. Let $\varphi: \R^d \rightarrow \R$ be as above. We assume in addition that $\varphi$ is radially symmetric and satisfies
\[
\varphi(r):= \left\{
\begin{array}{cl}
r^2 &\text{if } r\leq 1, \\
\text{const.} &\text{if } r\geq 10,
\end{array}
\right.
\quad \text{and} \quad \varphi''(r) \leq 2 \text{ for } r\geq 0.
\]
The precise constant here is not important. For $R>1$ given, we define the scaled function $\varphi_R: \R^d \rightarrow \R$ by
\begin{align}
\varphi_R(x) = \varphi_R(r):= R^2 \varphi(r/R), \quad r=|x|. \label{define varphi_R}
\end{align}
It is easy to check that
\[
2-\varphi''_R(r) \geq 0, \quad 2-\frac{\varphi'_R(r)}{r} \geq 0, \quad 2d-\Delta \varphi_R(x) \geq 0, \quad \forall r\geq 0, \forall x \in \R^d.
\]
Using Lemma $\ref{lemma time evolution}$, we have the following localized virial estimate for the time evolution of $M_{\varphi_R}(u(t))$.
\begin{lemma}[Localized virial estimate \cite{BoulengerHimmelsbachLenzmann}] \label{lemma localized virial estimate}
Let $d\geq 2$, $\frac{d}{2d-1} \leq s <1$, $0<\alpha <\frac{4s}{d-2s}$, $\varphi_R$ be as in $(\ref{define varphi_R})$. Let $u \in C([0,T), H^s)$ be a radial solution to $(\ref{FNLS})$. Then for any $t \in [0,T)$,
\begin{align}
\begin{aligned}
\frac{d}{dt} M_{\varphi_R}(u(t)) &\leq 8s \|u(t)\|^2_{\dot{H}^s} - \frac{4d\alpha}{\alpha+2} \|u(t)\|^{\alpha+2}_{L^{\alpha+2}}  + O\left(R^{-2s} + R^{-\frac{\alpha(d-1)}{2} + \eps s} \|u(t)\|^{\frac{\alpha}{2s} +\eps}_{\dot{H}^s} \right) \\
&=4d\alpha E(u(t)) - 2(d\alpha-4s) \|u(t)\|^2_{\dot{H}^s} + O\left(R^{-2s} + R^{-\frac{\alpha(d-1)}{2} + \eps s} \|u(t)\|^{\frac{\alpha}{2s} +\eps}_{\dot{H}^s} \right) \\
&=8I(u(t)) + O\left(R^{-2s} + R^{-\frac{\alpha(d-1)}{2} + \eps s} \|u(t)\|^{\frac{\alpha}{2s} +\eps}_{\dot{H}^s} \right),
\end{aligned}
\label{localized virial estimate}
\end{align}
for any $0<\eps<\frac{(2s-1)\alpha}{2s}$. Here the implicit constant depends only on $\|u_0\|_{L^2}, d, \eps, \alpha$ and $s$.
\end{lemma}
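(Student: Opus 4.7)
The plan is to invoke Lemma~\ref{lemma time evolution} with $\varphi = \varphi_R$, producing three contributions: the biharmonic piece $-\int_0^\infty m^s \int \Delta^2\varphi_R|u_m|^2\,dx\,dm$, the Hessian piece $4\sum_{j,k}\int_0^\infty m^s\int \partial^2_{jk}\varphi_R\, \partial_j\bar u_m\partial_k u_m\,dx\,dm$, and the potential piece $-\frac{2\alpha}{\alpha+2}\int \Delta\varphi_R|u|^{\alpha+2}dx$. The strategy is to write each weight as its global (unbounded) value on $\{|x|\leq R\}$ plus a correction supported in $\{|x|>R\}$, so that the principal parts reassemble the virial identity~$(\ref{virial identity})$ and yield $8s\|u\|^2_{\dot H^s}-\tfrac{4d\alpha}{\alpha+2}\|u\|^{\alpha+2}_{L^{\alpha+2}}=8I(u)$, while the corrections are absorbed into the $O(\cdot)$ error.

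For the derivative-type corrections, I would decompose $\partial^2_{jk}\varphi_R = 2\delta_{jk} + (\partial^2_{jk}\varphi_R - 2\delta_{jk})$ and apply~$(\ref{auxiliary identity})$ to recognize the diagonal contribution as $8s\|u\|^2_{\dot H^s}$. The remainders satisfy $|\partial^2_{jk}\varphi_R - 2\delta_{jk}|\lesssim 1$ and $|\Delta^2\varphi_R|\lesssim R^{-2}$ on $\{|x|>R\}$ and vanish elsewhere. A combination of Plancherel applied to $u_m = c_s(-\Delta+m)^{-1}u$ together with the change of variable $m = |\xi|^2 t$ inside the $m$-integral rewrites these contributions in terms of fractional Sobolev norms, which by $L^2$-conservation produce the $O(R^{-2s})$ error. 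For the nonlinear piece, I would split
\[
-\frac{2\alpha}{\alpha+2}\int \Delta\varphi_R|u|^{\alpha+2}\,dx = -\frac{4d\alpha}{\alpha+2}\|u\|^{\alpha+2}_{L^{\alpha+2}} + \frac{2\alpha}{\alpha+2}\int(2d-\Delta\varphi_R)|u|^{\alpha+2}\,dx,
\]
and bound the remainder by $\int_{|x|>R}|u|^{\alpha+2}\,dx \leq \|u\|^\alpha_{L^\infty(|x|>R)}\|u\|^2_{L^2}$. The decisive input here is a radial fractional Strauss-type estimate valid for $u\in H^s_{\mathrm{rad}}$ in the range $s\geq d/(2d-1)$, of the form $\|u\|_{L^\infty(|x|>R)}\lesssim R^{-(d-1)/2}\|u\|^\theta_{\dot H^s}\|u\|^{1-\theta}_{L^2}$ for an admissible exponent $\theta$; raising to the $\alpha$-th power and calibrating $\theta\alpha = \alpha/(2s)+\epsilon$ yields the claimed factor $R^{-\alpha(d-1)/2+\epsilon s}\|u\|^{\alpha/(2s)+\epsilon}_{\dot H^s}$.

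The main obstacle is to obtain clean $R^{-2s}$ decay in the nonlocal derivative corrections: because $u_m$ is the resolvent-smoothed $c_s(-\Delta+m)^{-1}u$, a direct integration by parts against $\varphi_R$ in physical variables is not available, and one must combine the spatial cutoff with a Mellin-type scaling on the $m$-variable to recover the correct power of $R$. A secondary subtlety is the calibration of $\epsilon$: the condition $0<\epsilon<(2s-1)\alpha/(2s)$ reflects the admissible range of exponents in the fractional radial Sobolev inequality, and together with the standing hypothesis $\alpha<4s$ keeps the exponent on $\|u\|_{\dot H^s}$ in the error term strictly below $2$, which is precisely what is needed to convert the virial estimate into a blow-up mechanism in the application to Theorem~\ref{theorem strong instability}.
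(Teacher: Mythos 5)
The paper does not reprove this lemma; it is quoted from Boulenger--Himmelsbach--Lenzmann \cite[Lemma 2.2]{BoulengerHimmelsbachLenzmann}, and your outline follows the same architecture as their proof (apply Lemma~\ref{lemma time evolution} with $\varphi=\varphi_R$, recover the principal terms of the formal virial identity $(\ref{virial identity})$, control the nonlinear remainder on $\{|x|\gtrsim R\}$ by a radial Strauss-type estimate). However, there is a genuine error in your treatment of the Hessian term. You propose to write $\partial^2_{jk}\varphi_R=2\delta_{jk}+(\partial^2_{jk}\varphi_R-2\delta_{jk})$ and to absorb the correction into the $O(R^{-2s})$ error on the grounds that $|\partial^2_{jk}\varphi_R-2\delta_{jk}|\lesssim 1$ on $\{|x|>R\}$. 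A bounded weight carrying no negative power of $R$ can only yield
\[
\Bigl|4\sum_{j,k}\int_0^\infty m^s\int(\partial^2_{jk}\varphi_R-2\delta_{jk})\,\partial_j\overline{u}_m\,\partial_k u_m\,dx\,dm\Bigr|\lesssim \int_0^\infty m^s\int|\nabla u_m|^2\,dx\,dm = s\|u(t)\|^2_{\dot H^s},
\]
i.e.\ an $O(1)$ multiple of $\|u(t)\|^2_{\dot H^s}$ with no decay in $R$ and no smallness; this would alter the coefficient $8s$ in front of $\|u(t)\|^2_{\dot H^s}$ and destroy the negative term $-2(d\alpha-4s)\|u(t)\|^2_{\dot H^s}$ on which the whole blow-up argument rests. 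The correct move --- and the reason the cutoff is constructed with $\varphi''\leq 2$ and the lemma is stated as a one-sided inequality rather than an identity with error --- is to use radiality of $u$ (hence of $u_m$): pointwise $\sum_{j,k}\partial^2_{jk}\varphi_R\,\partial_j\overline{u}_m\,\partial_k u_m=\varphi_R''(r)\,|\partial_r u_m|^2\leq 2|\nabla u_m|^2$, so the full Hessian contribution is bounded above by $8s\|u(t)\|^2_{\dot H^s}$ and the correction is discarded by sign, not estimated.

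Two smaller points. First, the $O(R^{-2s})$ bound for the biharmonic term is not the routine Plancherel computation you describe: applying $\|\Delta^2\varphi_R\|_{L^\infty}\lesssim R^{-2}$ and then integrating $m^s\|u_m\|^2_{L^2}$ over $m$ produces the negative-order norm $\|u\|^2_{\dot H^{s-1}}$, which is not controlled by the conserved mass; one must split the $m$-integration (or integrate by parts against $\nabla\Delta\varphi_R$) and interpolate between the bounds $\|u_m\|_{L^2}\lesssim m^{-1}\|u\|_{L^2}$ and $\|\nabla u_m\|_{L^2}\lesssim m^{-1/2}\|u\|_{L^2}$ to land on $R^{-2s}\|u_0\|^2_{L^2}$, which is exactly where the mass-dependence of the implicit constant enters. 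Your ``Mellin-type scaling'' remark gestures at this difficulty without resolving it. Second, your handling of the nonlinear remainder via the exterior radial Strauss inequality, the calibration of the interpolation exponent, and the roles of $0<\eps<\frac{(2s-1)\alpha}{2s}$ (admissibility of the Strauss exponent) and of $\alpha<4s$ (needed only in the application, to keep $\frac{\alpha}{2s}+\eps\leq 2$) are consistent with the cited proof.
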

We refer the reader to \cite[Lemma 2.2]{BoulengerHimmelsbachLenzmann} for the proof of the above result. 

\begin{remark} \label{remark localized virial estimate}
	\begin{itemize}
		\item The restriction $\frac{d}{2d-1} \leq s <1$ follows from the local well-posedness of radial $H^s$ solutions for $(\ref{FNLS})$ given in Proposition $\ref{proposition local well-posedness}$.
		\item In practice, we need the exponent $\frac{\alpha}{2s} + \eps$ to be smaller than or equal to 2. This leads to the restriction $\alpha <4s$.
	\end{itemize}
\end{remark}

We are now able to prove our main result-Theorem $\ref{theorem strong instability}$.

\noindent {\it Proof of Theorem $\ref{theorem strong instability}$.}
Let $\eps>0$. Since $\phi^\lambda_\omega \rightarrow \phi_\omega$ in $H^s$ as $\lambda \rightarrow 1$. There exists $\lambda_0>1$ such that $\|\phi_\omega -\phi^{\lambda_0}_\omega\|_{H^s} <\eps$. By decreasing $\lambda_0$ if necessary, we claim that $\phi^{\lambda_0}_\omega \in C_\omega$. Indeed, a direct computation shows that
\[
S_\omega(\phi^\lambda_\omega) = \frac{\lambda^{2s}}{2}\|\phi_\omega\|^2_{\dot{H}^s} +\frac{\omega}{2}\|\phi_\omega\|^2_{L^2} - \frac{\lambda^{\frac{d\alpha}{2}}}{\alpha+2} \|\phi_\omega\|^{\alpha+2}_{L^{\alpha+2}},
\]
and
\[
\partial_\lambda S_\omega(\phi^\lambda_\omega) = s \lambda^{2s-1} \|\phi_\omega\|^2_{\dot{H}^s} - \frac{d\alpha}{2(\alpha+2)} \lambda^{\frac{d\alpha}{2}-1} \|\phi_\omega\|^{\alpha+2}_{L^{\alpha+2}} = \frac{I(\phi^\lambda_\omega)}{\lambda}.
\]
It is not hard to see that the equation $\partial_\lambda S_\omega(\phi^\lambda_\omega) =0$ admits a unique non-zero solution 
\[
\left(\frac{\|\phi_\omega\|^2_{\dot{H}^s}}{\frac{d\alpha}{2s(\alpha+2)} \|\phi_\omega\|^{\alpha+2}_{L^{\alpha+2}} } \right)^{\frac{2}{d\alpha-4s}} =1.
\]
Note that the last equality comes from the fact that $I(\phi_\omega) =0$, which follows from the Pohozaev's identities $(\ref{pohozaev identities elliptic equation omega})$. This implies that
\[
\left\{
\begin{array}{cl}
\partial_\lambda S_\omega(\phi^\lambda_\omega) >0 &\text{if } \lambda \in (0,1), \\
\partial_\lambda S_\omega(\phi^\lambda_\omega) <0 &\text{if } \lambda \in (1,\infty).
\end{array}
\right.
\]
We thus get $S_\omega(\phi^\lambda_\omega) <S_\omega(\phi_\omega)$ for any $\lambda>0, \lambda \ne 1$. Since $I(\phi^\lambda_\omega) = \lambda \partial_\lambda S_\omega(\phi^\lambda_\omega)$, we have
\[
\left\{
\begin{array}{cl}
I(\phi^\lambda_\omega) >0 &\text{if } \lambda \in (0,1), \\
I(\phi^\lambda_\omega) <0 &\text{if } \lambda \in (1,\infty).
\end{array}
\right.
\]
We thus obtain
\[
S_\omega(\phi^{\lambda_0}_\omega) <S_\omega(\phi_\omega), \quad I(\phi^{\lambda_0}) <0.
\]
It follows that $\phi^{\lambda_0}_\omega \in C_\omega$. 

By Proposition $\ref{proposition local well-posedness}$, we see that under the assumption $d\geq 2$, $\frac{d}{2d-1} \leq s<1$ and $\frac{4s}{d}<\alpha<\frac{4s}{d-2s}$, there exists a unique solution $u \in C([0,T), H^s_{\text{rad}})$ with initial data $u_0 = \phi^{\lambda_0}_\omega$, where $T>0$ is the maximal time of existence. Note that $\phi^{\lambda_0}_\omega$ is radially symmetric. We will show that the solution $u$ blows up in finite time. It is done by several steps. 

\noindent {\bf Step 1.} We claim that there exists $a>0$ such that
\begin{align}
I(u(t)) \leq -a, \quad \forall t \in [0,T). \label{negativity}
\end{align}
Indeed, since $C_\omega$ is invariant under the flow of $(\ref{FNLS})$ with $d\geq 2$, $\frac{d}{2d-1} \leq s<1$ and $\frac{4s}{d}<\alpha <\frac{4s}{d-2s}$, we have $u(t) \in C_\omega$ for all $t\in [0,T)$. By Lemma $\ref{lemma key estimate}$, we have
\[
I(u(t)) \leq 2s (S_\omega(u(t)) - S_\omega(\phi_\omega)) = 2s(S_\omega(\phi^{\lambda_0}_\omega) - S_\omega(\phi_\omega)).
\]
This proves $(\ref{negativity})$ with $a = 2s(S_\omega(\phi_\omega) - S_\omega(\phi^{\lambda_0}_\omega))>0$. 

\noindent {\bf Step 2.} We next claim that there exists $b>0$ such that
\begin{align}
\frac{d}{dt} M_{\varphi_R}(u(t)) &\leq -b \|u(t)\|^2_{\dot{H}^s}, \label{estimate 1} \\
\|u(t)\|_{\dot{H}^s} & \gtrsim 1, \label{estimate 2}
\end{align}
for all $t\in [0,T)$, where $\varphi_R$ is as in $(\ref{define varphi_R})$. Let us first prove $(\ref{estimate 2})$. Assume that $(\ref{estimate 2})$ is not true. Then there exists $(t_n)_{n\geq 1}$ a time sequence in $[0,T)$ such that $\|u(t_n)\|_{\dot{H}^s} \rightarrow 0$ as $n\rightarrow \infty$. By the sharp Gagliardo-Nirenberg inequality $(\ref{sharp gagliardo-nirenberg inequality})$, we have
\[
\|u(t_n)\|^{\alpha+2}_{L^{\alpha+2}} \leq C_{\text{opt}} \|u(t_n)\|^{\frac{d\alpha}{2s}}_{\dot{H}^s} \|u(t_n)\|^{\alpha+2-\frac{d\alpha}{2s}}_{L^2} \rightarrow 0,
\]
as $n\rightarrow \infty$. Here we use the conservation of mass to get the last convergence. It follows that 
\[
I(u(t_n)) = s \|u(t_n)\|^2_{\dot{H}^s} - \frac{d\alpha}{2(\alpha+2)} \|u(t_n)\|^{\alpha+2}_{L^{\alpha+2}} \rightarrow 0,
\]
as $n\rightarrow \infty$, which contradicts to $(\ref{negativity})$. We now prove $(\ref{estimate 1})$. Since $u(t)$ is radially symmetric, we apply Lemma $\ref{lemma localized virial estimate}$ to have
\[
\frac{d}{dt} M_{\varphi_R}(u(t)) \leq 4d\alpha E(u(t)) - 2(d\alpha-4s)\|u(t)\|^2_{\dot{H}^s} + O \left( R^{-2s} + R^{-\frac{\alpha(d-1)}{2} +\eps s} \|u(t)\|^{\frac{\alpha}{2s} +\eps}_{\dot{H}^s} \right),
\]
for any $t\in [0,T)$ and any $R>1$. Thanks to the assumption $\alpha <4s$, we can apply the Young inequality to get for any $\eta>0$,
\[
R^{-\frac{\alpha(d-1)}{2} +\eps s} \|u(t)\|^{\frac{\alpha}{2s} +\eps}_{\dot{H}^s} \lesssim \eta \|u(t)\|^2_{\dot{H}^s} + \eta^{-\frac{\alpha+2\eps s}{4s-\alpha - 2\eps s}} R^{-\frac{2s(\alpha(d-1)-2\eps s)}{4s-\alpha -2\eps s}}.
\]
We thus get
\[
\frac{d}{dt} M_{\varphi_R}(u(t)) \leq 4d\alpha E(u(t)) - 2(d\alpha-4s) \|u(t)\|^2_{\dot{H}^s} + C\eta \|u(t)\|^2_{\dot{H}^s} + O \left( R^{-2s} + \eta^{-\frac{\alpha+2\eps s}{4s-\alpha - 2\eps s}} R^{-\frac{2s(\alpha(d-1)-2\eps s)}{4s-\alpha -2\eps s}} \right),
\]
for any $t\in [0,T)$, any $\eta>0$, any $R>1$ and some constant $C>0$. 

Now, we fix $t\in [0,T)$ and denote
\[
\mu:= \frac{4d\alpha |E(u_0)| +2}{d\alpha -4s}.
\]
We consider two cases.

\noindent {\bf Case 1.} 
\[
\|u(t)\|^2_{\dot{H}^s} \leq \mu.
\]
Since $4d\alpha E(u(t)) - 2 (d\alpha -4s) \|u(t)\|^2_{\dot{H}^s} = 8 I(u(t)) \leq -8a$ for all $t\in [0,T)$, we have 
\[
\frac{d}{dt} M_{\varphi_R}(u(t)) \leq -8a + C\eta \mu + O \left( R^{-2s} + \eta^{-\frac{\alpha+2\eps s}{4s-\alpha - 2\eps s}} R^{-\frac{2s(\alpha(d-1)-2\eps s)}{4s-\alpha -2\eps s}} \right).
\]
By choosing $\eta>0$ small enough and $R>1$ large enough depending on $\eta$, we see that
\[
\frac{d}{dt} M_{\varphi_R}(u(t)) \leq -4a \leq -\frac{4a}{\mu} \|u(t)\|^2_{\dot{H}^s}.
\]

\noindent {\bf Case 2.} 
\[
\|u(t)\|^2_{\dot{H}^s} \geq \mu.
\]
In this case, we have
\[
4d\alpha E(u_0) - 2(d\alpha-4s) \|u(t)\|^2_{\dot{H}^s} \leq 4d\alpha E(u_0) - (d\alpha -4s) \mu - (d\alpha-4s)\|u(t)\|^2_{\dot{H}^s} \leq -2 -(d\alpha-4s)\|u(t)\|^2_{\dot{H}^s}.
\]
Thus,
\[
\frac{d}{dt} M_{\varphi_R}(u(t)) \leq -2 - (d\alpha -4s)\|u(t)\|^2_{\dot{H}^s} + C\eta \|u(t)\|^2_{\dot{H}^s} + O\left( R^{-2s} + \eta^{-\frac{\alpha+2\eps s}{4s-\alpha - 2\eps s}} R^{-\frac{2s(\alpha(d-1)-2\eps s)}{4s-\alpha -2\eps s}} \right).
\]
Since $d\alpha-4s>0$, we choose $\eta>0$ small enough so that
\[
d\alpha-4s - C\eta \geq \frac{d\alpha-4s}{2}.
\]
We next choose $R>1$ large enough depending on $\eta$ so that
\[
-2 + O\left( R^{-2s} + \eta^{-\frac{\alpha+2\eps s}{4s-\alpha - 2\eps s}} R^{-\frac{2s(\alpha(d-1)-2\eps s)}{4s-\alpha -2\eps s}} \right) \leq 0.
\]
We thus obtain
\[
\frac{d}{dt} M_{\varphi_R}(u(t)) \leq - \frac{d\alpha-4s}{2} \|u(t)\|^2_{\dot{H}^s}.
\]
In both cases, the choices of $\eta>0$ and $R>1$ are independent of $t$. Therefore, $(\ref{estimate 1})$ follows with $b= \min \left\{\frac{4a}{\mu}, \frac{d\alpha-4s}{2} \right\}>0$.

\noindent {\bf Step 3.} We are now able to show that the solution $u$ blows up in finite time. Assume by contradiction that $T=+\infty$. By $(\ref{estimate 1})$ and $(\ref{estimate 2})$, we see that $\frac{d}{dt} M_{\varphi_R}(u(t)) \leq -C$ for some $C>0$. Integrating this bound, it yields that $M_{\varphi_R}(u(t)) <0$ for all $t\geq t_0$ with some $t_0 \gg 1$ large enough. Taking integration over $[t_0,t]$ of $(\ref{estimate 1})$, we obtain
\begin{align}
M_{\varphi_R}(u(t)) \leq -b \int_{t_0}^t \|u(\tau)\|^2_{\dot{H}^s}d\tau, \label{estimate 3}
\end{align}
for all $t\geq t_0$. On the other hand, by Lemma $\ref{lemma various estimate 1}$ and the conservation of mass, we have
\begin{align}
|M_{\varphi_R}(u(t))| \leq C(\varphi_R) \left(\|u(t)\|^{\frac{1}{s}}_{\dot{H}^s} + \|u(t)\|^{\frac{1}{2s}}_{\dot{H}^s} \right), \label{estimate 4}
\end{align}
where we have used the interpolation estimate $\|u\|_{\dot{H}^{\frac{1}{2}}} \lesssim \|u\|_{L^2}^{1-\frac{1}{2s}} \|u\|^{\frac{1}{2s}}_{\dot{H}^s}$.
Combining $(\ref{estimate 2})$ and $(\ref{estimate 4})$, we get
\begin{align}
|M_{\varphi_R}(u(t))| \leq C(\varphi_R) \|u(t)\|^{\frac{1}{s}}_{\dot{H}^s}. \label{estimate 5}
\end{align}
It follows from $(\ref{estimate 3})$ and $(\ref{estimate 5})$ that 
\begin{align}
M_{\varphi_R}(u(t)) \leq -A \int_{t_0}^t |M_{\varphi_R}(u(\tau))|^{2s} d\tau, \label{integral inequality}
\end{align}
for all $t\geq t_0$ with some constant $A=C(b,R)>0$. Set $z(t):= \int_{t_0}^t |M_{\varphi_R}(u(\tau))|^{2s} d\tau$ for $t\geq t_0$ and fix some time $t_1>t_0$. We see that $z(t)$ is strictly increasing and non-negative. Moreover,
\[
z'(t) = |M_{\varphi_R}(u(t))|^{2s} \geq A^{2s} z^{2s}(t), 
\]
for some $A=C(b,R)>0$. Integrating the above inequality on $[t_1,t]$, we get
\[
z(t) \geq \frac{z(t_1)}{\left(1-(2s-1)A^{2s} [z(t_1)]^{2s-1} (t-t_1)\right)^{\frac{1}{2s-1}}},
\]
for all $t\geq t_1$. It follows that
\[
z(t) \rightarrow +\infty  \text{ as } t \uparrow t_*:= t_1 + \frac{1}{(2s-1)A^{2s} [z(t_1)]^{2s-1}} >t_1.
\]
By $(\ref{integral inequality})$, we obtain
\[
M_{\varphi_R}(u(t)) \leq -A z(t) \rightarrow -\infty \text{ as } t\uparrow t_*.
\]
Therefore, the solution cannot exist for all time $t\geq 0$ and consequencely we must have $T<+\infty$. The proof is complete.
\defendproof

\section*{Acknowledgments}
The author would like to express his deep gratitude to his wife - Uyen Cong for her encouragement and support. He would like to thank his supervisor Prof. Jean-Marc Bouclet for the kind guidance and constant encouragement. He also would like to thank the reviewer for his/her helpful comments and suggestions. 



\begin{thebibliography}{99}
\bibitem{Ardila} {\bf A. H. Ardilla}, {\it Existence and stability of standing waves for nonlinear fractional Schr\"odinger equation with logarithmic nonlinearity}, Nonlinear Anal. 155 (2017), 52--64.

\bibitem{BensouilahDinh} {\bf A. Bensouilah, V. D. Dinh}, {\it On instability of radial standing waves for the nonlinear Schr\"odinger equation with inverse-square potential}, \url{arXiv:1806.01068}, 2018.

\bibitem{Bhattarai} {\bf S. Bhattarai}, {\it On fractional Schr\"odinger systems of Choquard type}, J. Differential Equations 263 (2017), No. 6, 3197--3229.

\bibitem{BoulengerHimmelsbachLenzmann} {\bf T. Boulenger, D. Himmelsbach, E. Lenzmann}, {\it Blowup for fractional NLS}, J. Funct. Anal. 271 (2016), 2569--2603.

\bibitem{ChoFallHajaiejMarkowichTrabelsi} {\bf Y. Cho, M. M. Fall, H. Hajaiej, P. A. Markowich, S. Trabelsi}, {\it Orbital stability of standing waves of a class of fractional Schr\"odinger equations with Hartree-type nonlinearity}, Anal. Appl. (Singap.) 15 (2017), No. 5, 699--729.

\bibitem{ChoHwangHajaiejOzawa} {\bf Y. Cho, H. Hajaiej, G. Hwang, T. Ozawa}, {\it On the orbital stability of fractional Schr\"odinger equation}, Comm. Pure Appl. Anal. 13 (2014), No. 3, 1367--1282.

\bibitem{Dinh-fract} {\bf V. D. Dinh}, {\it Well-posedness of nonlinear fractional Schr\"odinger and wave equations in Sobolev spaces}, \url{arXiv:1609.06181}, 2016.

\bibitem{Dinh-mfnls} {\bf V. D. Dinh}, {\it On blowup solutions to the focusing mass-critical nonlinear fractional Schr\"odinger equation}, to appear in Commun. Pure Appl. Anal,  \href{https://arxiv.org/abs/1801.08049}{arXiv:1801.08049}, 2018.

\bibitem{FengZhang-combined} {\bf B. Feng, H. Zhang}, {\it Stability of standing waves for the fractional Schr\"odinger-Hartree equation}, J. Math. Anal. Appl. 460 (2018), No. 1, 352--364.

\bibitem{FengZhang-choquard} {\bf B. Feng, H. Zhang}, {\it Stability of standing waves for the fractional Schr\"odinger-Choquard equation}, Comput. Math. Appl. 75 (2018), 2499--2507.

\bibitem{FrankLenzmann} {\bf R. L. Frank, E. Lenzmann}, {\it Uniqueness of nonlinear gound states for fractional Laplacians in $\R$}, Acta Math. 210 (2013), No. 2, 261--318.

\bibitem{FrankLenzmannSilvestre} {\bf R. L. Frank, E. Lenzmann, L. Silvestre}, {\it Uniqueness of radial solutions for the fractional Laplacian}, Comm. Pure Appl. Math. 69 (2016), 1671--1725.

\bibitem{FrohJonsLenz} {\bf J. Fr\"ohlich, G. Jonsson, E. Lenzmann}, {\it Boson stars as solitary waves}, Comm. Math. Phys. 274 (2007), No. 1, 1--30.

\bibitem{GuoHuang} {\bf B. Guo, D. Huang}, {\it Existence and stability of standing waves for nonlinear fractional Schr\"odinger equations}, J. Math. Phys. 53 (2012), 083702.

\bibitem{IonePusa} {\bf A. D. Ionescu, F. Pusateri}, {\it Nonlinear fractional Schr\"odinger equations in one dimension}, J. Funct. Anal. 266 (2014), 139--176.

\bibitem{HongSire} {\bf Y. Hong, Y. Sire}, {\it On fractional Schr\"odinger equations in Sobolev spaces}, Commun. Pure Appl. Anal. 14 (2015), 2265--2282.

\bibitem{HmidiKeraani} {\bf T. Hmidi, S. Keraani}, {\it Blowup theory for the critical nonlinear Schr\"odinger equation revisited}, Int. Math. Res. Not. 46 (2005), 2815--2828.

\bibitem{KirkLenzStaf} {\bf K. Kirkpatrick, E. Lenzmann, G. Staffilani}, {\it On the continuum limit for discrete NLS with long-range lattice interactions}, Comm. Math. Phys. 317 (2013), No. 3, 563--591.

\bibitem{Laskin1} {\bf N, Laskin}, {\it Fractional Quantum Mechanics and L\'{e}vy Path Integrals}, Physics Letter A 268 (2000) 298--304.

\bibitem{Laskin2} {\bf N, Laskin}, {\it Fractional Schr\"{o}dinger equations}, Physics Review E 66 (2002) 056108.

\bibitem{Lions} {\bf P. L. Lions}, {\it The concentration-compactness principle in the calculus of variations. The locally compact case. I, II}, Ann. Inst. H. Poincar\'e Anal. Non Lin\'eaire 1 (1984), 109--145, 223--283.

\bibitem{PengShi} {\bf C. Peng, Q. Shi}, {\it Stability of standing waves for the fractional nonlinear Schr\"odinger equation}, J. Math. Phys. 59 (2018), 011508.

\bibitem{Wu} {\bf D. Wu}, {\it Existence and stability of standing waves for nonlinear fractional Schr\"odinger equation with Hartree type nonlinearity}, J. Math. Anal. Appl. 411 (2014), No. 2, 530--542.

\bibitem{ZhangZhu} {\bf J. Zhang, S. Zhu}, {\it Stability of standing waves for the nonlinear fractional Schr\"odinger equation}, J. Dynam. Differential Equations 29 (2017), No. 3, 1017--1030.

\bibitem{Zhu-combined} {\bf S. Zhu}, {\it Existence of stable standing waves for the fractional Schr\"odinger equations with combined nonlinearities}, J. Evol. Equ. 17 (2017), No. 3, 1003--1021.

\bibitem{Zhu} {\bf S. Zhu}, {\it On the blow-up solutions for the nonlinear fractional Schr\"odinger equation}, J. Differential Equations 261 (2016), No. 2, doi: \url{10.1016/j.jde.2016.04.007}, 1506--1531.

\end{thebibliography}
\end{document}